\title[On the best constants associated with $n$-distances]{On the best constants associated with $n$-distances}\thanks{Corresponding author: Jean-Luc Marichal is with the Mathematics Research Unit, University of Luxembourg, Maison du Nombre, 6, avenue de la Fonte, L-4364 Esch-sur-Alzette, Luxembourg. Email: jean-luc.marichal[at]uni.lu}
\author{Gergely Kiss}
\address{Alfr\'ed R\'enyi Institute of Mathematics, Re\'altanoda u.\ 13-15, HU-1053 Budapest, Hungary}
\email{kigergo57[at]gmail.com}
\author{Jean-Luc Marichal}
\address{Mathematics Research Unit, University of Luxembourg, Maison du Nombre, 6, avenue de la Fonte, L-4364 Esch-sur-Alzette, Luxembourg}
\email{jean-luc.marichal[at]uni.lu}
\date{November 27, 2019}
\begin{document}

\theoremstyle{plain}
\newtheorem{theorem}{Theorem}[section]
\newtheorem{lemma}[theorem]{Lemma}
\newtheorem{proposition}[theorem]{Proposition}
\newtheorem{corollary}[theorem]{Corollary}
\newtheorem{fact}[theorem]{Fact}
\newtheorem{conjecture}[theorem]{Conjecture}
\newtheorem*{main}{Main Theorem}

\theoremstyle{definition}
\newtheorem{definition}[theorem]{Definition}
\newtheorem{example}[theorem]{Example}
\newtheorem{algorithm}{Algorithm}

\theoremstyle{remark}
\newtheorem{remark}{Remark}
\newtheorem{claim}{Claim}

\newcommand{\N}{\mathbb{N}}
\newcommand{\R}{\mathbb{R}}
\newcommand{\Cdot}{\boldsymbol{\cdot}}

\begin{abstract}
We pursue the investigation of the concept of $n$-distance, an $n$-variable version of the classical concept of distance recently introduced and investigated by Kiss, Marichal, and Teheux. We especially focus on the challenging problem of computing the best constant associated with a given $n$-distance. In particular, we define and investigate the best constants related to partial simplex inequalities. We also introduce and discuss some subclasses of $n$-distances defined by considering some properties. Finally, we discuss an interesting link between the concepts of $n$-distance and multidistance.
\end{abstract}

\keywords{$n$-distance, standard $n$-distance, multidistance, simplex inequality}

\subjclass[2010]{Primary 39B72; Secondary 26D99}

\maketitle

\section{Introduction}

Let $X$ be an arbitrary set, with $|X|\geq 2$, let $n\geq 2$ be an integer, and set $\R_+=\left[0,+\infty\right[$. Recall that a map $d\colon X^n\to\R_+$ is said to be an \emph{$n$-distance} (a \emph{distance} if $n=2$) on $X$ if it satisfies the following three conditions:
\begin{enumerate}
\item[(i)] $d(x_1,\ldots,x_n)=0$ if and only if $x_1=\cdots = x_n$,
\item[(ii)] $d$ is invariant under any permutation of its arguments,
\item[(iii)] $d(x_1,\ldots,x_n)\leq\sum_{i=1}^nd(x_1,\ldots,x_n)_i^z$ for all $x_1,\ldots,x_n,z\in X$.
\end{enumerate}
Here and throughout, the notation $d(x_1,\ldots,x_n)_i^z$ stands for the function obtained from $d(x_1,\ldots,x_n)$ by setting its $i$th variable to $z$. Condition (iii) is referred to as the \emph{simplex inequality} (the \emph{triangle inequality} if $n=2$).

In the special case of $n=3$, the concept of $n$-distance was introduced in 1992 by Dhage \cite{Dhage1992} and called $D$-metrics. The general $n$-ary version defined above seems to be introduced only recently by Kiss et al.~\cite{KisMarTeh16,KisMarTeh18}.

We also observe that various alternative proposals for $n$-variable distances have been introduced so far, each of those having interesting features (see, e.g., Deza and Deza~\cite[Chapter 3]{Deza2014}).

In this paper, we focus our investigation on $n$-distances and particularly on the following remarkable property of $n$-distances. For many $n$-distances, the simplex inequality can be refined into
\begin{equation}\label{bestKn}
d(x_1,\ldots,x_n)~\leq ~K_n{\,}\sum_{i=1}^nd(x_1,\ldots,x_n)_i^z,\qquad x_1,\ldots,x_n,z\in X,
\end{equation}
for some constant $K_n\in\left]0,1\right[$. To give an example, the \emph{cardinality based} $n$-distance defined by $d(x_1,\ldots,x_n)=|\{x_1,\ldots,x_n\}|-1$ satisfies \eqref{bestKn} with $K_n=(n-1)^{-1}$ and this constant is \emph{optimal} in the sense that \eqref{bestKn} no longer holds if $K_n$ is replaced by any value lower than $(n-1)^{-1}$. In fact, the constant $K_n=(n-1)^{-1}$ is attained, e.g., when $x_1\neq x_2=\cdots =x_n=z$.

It is important to note that condition (i) above is necessary for such a constant to exist in $\left]0,1\right[$. Indeed, as it was observed in \cite{KisMarTeh18}, we always have $K_n=1$ if condition (i) is replaced by the following one:
\begin{enumerate}
\item[(i')] $d(x_1,\ldots,x_n)=0$ if and only if $|\{x_1,\ldots,x_n\}|<n$.
\end{enumerate}

The main purpose of paper \cite{KisMarTeh18} was to provide several instances of $n$-distances together with their associated best (i.e., optimal) constants (see Example~\ref{ex:q} and Table~\ref{tab:q} below), with the additional objective of pointing out relevant properties of $n$-distances.

In the present paper, we further investigate the general properties of the best constants associated with $n$-distances. More precisely, in Section 2 we recall some instances of $n$-distances and provide a few new ones. We observe that the best constant associated with any $n$-distance cannot be lower than $(n-1)^{-1}$ and we introduce the class of those $n$-distances, that we call ``standard $n$-distances'', whose associated best constants have precisely the value $(n-1)^{-1}$. We also investigate the problem of constructing an $n$-distance with a prescribed best constant. In Section 3, we show that many $n$-distances satisfy partial simplex inequalities, i.e., simplex inequalities whose sums have less than $n$ summands. We also investigate the best constants related to these partial simplex inequalities. In Section 4, we introduce and investigate subclasses of $n$-distances defined by considering additional properties such as repetition invariance and nonincreasingness under identification of variables. Finally, in Section 5, we show how some standard $n$-distances can be used to define multidistances, which are special multi-argument distances introduced by Mart\'{\i}n and Mayor \cite{MarMay11}.

The investigation of $n$-distances and of the associated best constants seems to be very recent and needs much more examples to be better understood. We hope that by providing some examples and results here we might attract researchers and make this exciting topic better known.

To make the reading of the paper easier, we have postponed the proofs of most of our results to the Appendix.

\section{Definitions and examples}

Before presenting the formal definition of the concept of best constant associated with an $n$-distance, let us first recall some $n$-distances introduced and investigated in \cite{KisMarTeh18}.

\begin{example}[{see \cite{KisMarTeh18}}]\label{ex:q}
The following are instances of $n$-distances. Some of them are defined in terms of a given distance $d_2$ on $X$.
\begin{itemize}
\item Drastic $n$-distance
$$
d(x_1,\ldots,x_n) ~=~
\begin{cases}
0, & \text{if $x_1=\cdots =x_n$},\\
1, & \text{otherwise}.
\end{cases}
$$
\item Cardinality based $n$-distance
$$
d(x_1,\ldots,x_n)~=~|\{x_1,\ldots,x_n\}|-1.
$$
\item Diameter
$$
d(x_1,\ldots,x_n)~=~\max_{\{i,j\}\subseteq\{1,\ldots,n\}}d_2(x_i,x_j).
$$
\item Sum based $n$-distance
$$
d(x_1,\ldots,x_n)~=~\sum_{\{i,j\}\subseteq\{1,\ldots,n\}}d_2(x_i,x_j).
$$
\item Arithmetic mean based $n$-distance ($X=\R$)
$$
d(x_1,\ldots,x_n)~=~\frac{1}{n}\sum_{i=1}^nx_i-\min\{x_1,\ldots,x_n\}.
$$
\item Fermat $n$-distance
$$
d(x_1,\ldots,x_n)~=~\min_{x\in X}\sum_{i=1}^nd_2(x_i,x).
$$
\item Number of lines determined by $n$ points in $\R^2$ ($X=\R^2$).
\item Radius of the smallest circle enclosing $n$ points in $\R^2$ ($X=\R^2$).
\item Area of the smallest circle enclosing $n$ points in $\R^2$ ($X=\R^2$ and $n\geq 3$).
\end{itemize}
\end{example}

\begin{definition}[{see \cite{KisMarTeh18}}]
The \emph{best constant} associated with an $n$-distance $d$ on $X$ is the infimum $K_n^*$ of the set of real numbers $K_n\in\left]0,1\right]$ for which the condition
\begin{equation}\label{eq:Kndd}
d(x_1,\ldots,x_n)~\leq ~ K_n\sum_{i=1}^nd(x_1,\ldots,x_n)_i^z{\,},\qquad x_1,\ldots,x_n,z\in X,
\end{equation}
holds. Equivalently,
$$
K^*_n ~=~ \sup_{\textstyle{x_1,\ldots,x_n,z\in X\atop |\{x_1,\ldots,x_n\}|{\,}\geq{\,} 2}}\frac{d(x_1,\ldots,x_n)}{\sum_{i=1}^nd(x_1,\ldots,x_n)_i^z}{\,}.
$$
We say that the best constant $K^*_n$ is \emph{attained} if there exists $(x_1,\ldots,x_n;z)\in X^{n+1}$, with $|\{x_1,\ldots,x_n\}|\geq 2$, such that
$$
d(x_1,\ldots,x_n)~=~ K^*_n\sum_{i=1}^nd(x_1,\ldots,x_n)_i^z{\,}.
$$
\end{definition}

\begin{remark}
We always have $K^*_2=1$ and this constant is attained regardless of the distance $d$ considered on $X$. Indeed, we always have
$$
d(x_1,x_2) ~=~ \sum_{i=1}^2d(x_1,x_2)_i^{x_1},\qquad x_1,x_2\in X.
$$
\end{remark}

Table~\ref{tab:q} provides the best constants corresponding to most of the $n$-distances defined in Example~\ref{ex:q}. As observed in \cite{KisMarTeh18}, the search for the best constant associated with a given $n$-distance is usually not an easy problem. It is strongly dependent on the $n$-distance itself. In some cases, we could at most determine an interval in which the best constant lies. We also note that all the best constants that we have found thus far are attained.

\begin{table}[tbp]
\begin{center}
\begin{small}
\begin{tabular}{|l|c|c|}
\hline Name & Best constant & Type\\
\hline
Drastic $n$-distance & $K^*_n=(n-1)^{-1}$ & attained\\
Cardinality based $n$-distance & $K^*_n=(n-1)^{-1}$ & attained\\
Diameter & $K^*_n=(n-1)^{-1}$ & attained\\
Sum based $n$-distance & $K^*_n=(n-1)^{-1}$ & attained\\
Arithmetic mean based $n$-distance & $K^*_n=(n-1)^{-1}$ & attained\\
Radius of the smallest encl.\ circle & $K^*_n=(n-1)^{-1}$ & attained\\
Area of the smallest encl.\ circle ($n\geq 3$) & $K_n^*=(n-3/2)^{-1}$ & attained\\
Fermat $n$-distance & $K_n^*\leq (4n-4)/(3n^2-4n)$ & ?\\
Number of lines ($n\geq 3$) & $(n-2+2/n)^{-1}\leq K_n^*<(n-2)^{-1}$ & ?\\
\hline
\end{tabular}
\end{small}
\end{center}
\bigskip
\caption{Best constants associated with some $n$-distances}
\label{tab:q}
\end{table}

We now state a remarkable, although almost trivial, result showing that the best constant associated with any $n$-distance cannot be lower than $(n-1)^{-1}$.

\begin{proposition}\label{prop:nfkns}
For any $n$-distance $d$ on $X$, we have $K_n^*\geq (n-1)^{-1}$.
\end{proposition}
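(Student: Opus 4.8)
The plan is to prove the lower bound by exhibiting a single admissible configuration $(x_1,\ldots,x_n;z)$ for which the ratio appearing in the supremum formula for $K_n^*$ equals exactly $(n-1)^{-1}$. Since $K_n^*$ is defined as the supremum of such ratios over all tuples with $|\{x_1,\ldots,x_n\}|\geq 2$, producing even one configuration attaining the value $(n-1)^{-1}$ immediately yields $K_n^*\geq (n-1)^{-1}$.

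The configuration I would use is the one already flagged in the introduction. Using $|X|\geq 2$, fix two distinct elements $a,z\in X$, and set $x_1=a$ and $x_2=\cdots=x_n=z$. Then $|\{x_1,\ldots,x_n\}|=2\geq 2$, so the tuple is admissible. First I would record that the numerator $d(a,z,\ldots,z)$ is strictly positive: its arguments are not all equal (as $a\neq z$), so condition (i) gives $d(a,z,\ldots,z)>0$. This nonvanishing is the only point that genuinely requires an argument, and it is precisely where condition (i) and the hypothesis $|X|\geq 2$ enter; it also guarantees that the division below is legitimate.

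Next I would evaluate the $n$ terms of the denominator $\sum_{i=1}^n d(x_1,\ldots,x_n)_i^z$. For $i=1$, replacing $x_1=a$ by $z$ gives $d(z,z,\ldots,z)=0$ by condition (i). For each $i\in\{2,\ldots,n\}$, the $i$th variable already equals $z$, so replacing it by $z$ leaves the tuple unchanged and the term equals $d(a,z,\ldots,z)$. Hence the denominator equals $(n-1)\,d(a,z,\ldots,z)$, and the ratio reduces to $d(a,z,\ldots,z)/\big((n-1)\,d(a,z,\ldots,z)\big)=(n-1)^{-1}$. Passing to the supremum over admissible configurations gives $K_n^*\geq(n-1)^{-1}$, as claimed.

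As the statement itself indicates (``almost trivial''), there is no substantial obstacle: the simplex inequality (iii) and permutation invariance (ii) are not even needed, and the argument rests solely on condition (i) applied twice, once to ensure the numerator is nonzero and once to collapse the $i=1$ term of the denominator to $0$.
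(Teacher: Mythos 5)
Your proof is correct and is essentially the paper's own argument: both use the configuration $x_1\neq x_2=\cdots=x_n=z$, observe that the $i=1$ term of the sum vanishes by condition (i) while the remaining $n-1$ terms equal $d(x_1,z,\ldots,z)>0$, and conclude $K_n^*\geq(n-1)^{-1}$. The only cosmetic difference is that you invoke the supremum formula for $K_n^*$ directly, whereas the paper phrases it as a lower bound on every admissible constant $K_n$ in \eqref{eq:Kndd}; these are equivalent by the definition.
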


\begin{proof}
Let $d$ be an $n$-distance on $X$ and let $x,y\in X$, with $x\neq y$. Using \eqref{eq:Kndd}, we obtain
$$
d(x,y,\ldots,y) ~\leq ~ K_n\sum_{i=1}^nd(x,y,\ldots,y)_i^y ~=~ K_n(n-1){\,}d(x,y,\ldots,y).
$$
Since $d(x,y,\ldots,y)\neq 0$, we get $K_n\geq (n-1)^{-1}$.
\end{proof}

As we can see in Table~\ref{tab:q}, many of the $n$-distances satisfying $K_n^*=(n-1)^{-1}$ are based on rather natural constructions. This observation together with the previous proposition motivate the following terminology.

\begin{definition}
We say that an $n$-distance $d$ on $X$ is \emph{standard} if $K_n^*=(n-1)^{-1}$.
\end{definition}

We know for instance that the radius (or equivalently, the diameter) of the smallest enclosing circle in $\R^2$ defines a standard $n$-distance (see Table~\ref{tab:q}). It is easy to see that this is also the case for the diameter of the smallest enclosing Chebyshev ball in $\R^q$ for any integer $q\geq 2$, that is,
$$
d(x_1,\ldots,x_n) ~=~ \max_{\{i,j\}\subseteq\{1,\ldots,n\}}\|x_i-x_j\|_{\infty}{\,},\qquad x_1,\ldots x_n\in\R^q.
$$
Indeed, this is exactly the diameter-type $n$-distance (see Example~\ref{ex:q}) constructed from the Chebyshev distance on $\R^q$, namely
$$
d_2(x,y) ~=~ \|x-y\|_{\infty}{\,},\qquad x,y\in\R^q.
$$

A few nonstandard $n$-distances based on geometric constructions have also been defined and investigated in \cite{KisMarTeh18} (see Table~\ref{tab:q}). Now, it may be an interesting challenge to introduce further nonstandard $n$-distances and determine their associated best constants. In the following result, we provide such an $n$-distance on $\R$.

\begin{proposition}[Length of a largest inner interval]\label{prop:Llii}
Let $d\colon\R^n\to\R_+$ be the map defined by
\begin{equation}\label{eq:LLII1}
d(x_1,\ldots,x_n) ~=~ \max_{i=1,\ldots,n-1}(x_{(i+1)}-x_{(i)}),
\end{equation}
where the symbol $x_{(i)}$ stands for the $i$th smallest element among $x_1,\ldots,x_n$. Then $d$ is an $n$-distance on $\R$. Its best constant is $K^*_n=2/n$ and is attained at any $(x_1,\ldots,x_n;z)$ such that $x_1< x_2=\cdots =x_n$ and $z=(x_1+x_2)/2$.
\end{proposition}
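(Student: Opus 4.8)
The plan is to verify (i)--(ii) by inspection and to deduce (iii) from the sharper bound $d(x_1,\dots,x_n)\le\frac2n\sum_{i=1}^n d(x_1,\dots,x_n)_i^z$, which is legitimate since $2/n\le1$ for $n\ge2$ and the right-hand side of \eqref{eq:Kndd} is nonnegative. Writing $y_i:=x_{(i)}$ and $g_i:=y_{i+1}-y_i$, we have $d=\max_i g_i$, a quantity depending only on the multiset $\{x_1,\dots,x_n\}$; hence (ii) is clear, and $d=0$ iff every gap vanishes, i.e.\ iff $x_1=\cdots=x_n$, which is (i).

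For $K_n^*\ge2/n$ I would evaluate the proposed configuration: with $x_1=0<x_2=\cdots=x_n=a$ and $z=a/2$ one gets $d=a$, while replacing any single coordinate by $z$ yields a tuple whose largest gap equals $a/2$, so the denominator in \eqref{eq:Kndd} equals $na/2$ and the ratio is exactly $2/n$. As $|\{x_1,\dots,x_n\}|=2$, the point is admissible, proving both $K_n^*\ge2/n$ and attainment.

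The core is the reverse inequality $\sum_j d^{(j)}\ge\frac n2\,g_{\max}$, where $g_{\max}=\max_i g_i$ and $d^{(j)}$ is the largest gap of the multiset $M_j$ obtained from $\{y_1,\dots,y_n\}$ by deleting $y_j$ and inserting $z$; note that this sum over the $n$ sorted indices equals the sum in \eqref{eq:Kndd}, repeated values being matched by equal multiplicities. Fix a largest gap, realized between $L:=y_k$ and $R:=y_{k+1}$, so $g_{\max}=R-L$ and $(L,R)$ contains no $y_i$. If $z\le L$ (resp.\ $z\ge R$), then deleting any point $\le L$ (resp.\ $\ge R$) leaves a point on each side of the empty interval $(L,R)$, whence $d^{(j)}\ge g_{\max}$; this fails for at most one index (the removal of a solitary point on the far side), so at least $n-1$ terms are $\ge g_{\max}$ and $\sum_j d^{(j)}\ge(n-1)g_{\max}\ge\frac n2 g_{\max}$ because $n-1\ge n/2$ for $n\ge2$.

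The delicate case is $L<z<R$; here $z$ splits the largest gap into $\alpha:=z-L$ and $\beta:=R-z$ with $\alpha+\beta=g_{\max}$, and since $z$ is the only inserted point in $(L,R)$, the point of $M_j$ immediately below $z$ (if any) is $\le L$ and the one immediately above (if any) is $\ge R$. Thus $d^{(j)}\ge\alpha$ unless the deletion erases the only point left of the gap, and $d^{(j)}\ge\beta$ unless it erases the only point to its right, so $d^{(j)}\ge\max(\alpha,\beta)$ for every index save at most one (for $n\ge3$ at most one side can be solitary). The main obstacle is to confirm this lone exceptional term is harmless: when $k=1$ the exceptional deletion still gives $d^{(1)}\ge\beta$ while the other $n-1$ terms are $\ge\max(\alpha,\beta)$, so $\sum_j d^{(j)}-\frac n2(\alpha+\beta)$ is at least $\frac n2(\beta-\alpha)$ if $\beta\ge\alpha$ and at least $(\frac n2-1)(\alpha-\beta)$ if $\alpha>\beta$, both nonnegative for $n\ge3$; the case $n-k=1$ is symmetric and $n=2$ is covered by the remark ($K_2^*=1=2/2$). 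Combining the two cases gives $\sum_j d^{(j)}\ge\frac n2 g_{\max}$, hence $K_n^*\le2/n$, and with the previous paragraph $K_n^*=2/n$.
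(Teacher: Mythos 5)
Your proof is correct and follows essentially the same route as the paper's: sort the values, locate the maximal gap $(L,R)$, split into the cases $z\notin\left]L,R\right[$ (where at least $n-1$ of the replaced configurations retain a gap of length $g_{\max}$) and $z\in\left]L,R\right[$ (where the gap splits into $\alpha,\beta$ and the boundary cases $k=1$, $k=n-1$ are handled by the same inequality $(n-1)\max(\alpha,\beta)+\min\text{-side term}\geq\tfrac{n}{2}(\alpha+\beta)$ that the paper writes with $\lambda=\alpha/g_{\max}$). The organization of the exceptional indices and the lower-bound/attainment verification also match the paper's argument.
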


\begin{remark}
It is not difficult to see that, for any integer $p\geq 1$, the map obtained by raising the $n$-distance defined in \eqref{eq:LLII1} to the $p$th power is an $n$-distance on $\R$ if and only if $n\geq 2^p$. Its associated best constant is $K^*_n=2^p/n$ and is attained at any $(x_1,\ldots,x_n;z)$ such that $x_1< x_2=\cdots =x_n$ and $z=(x_1+x_2)/2$. The proof is similar to that of Proposition~\ref{prop:Llii}.
\end{remark}

There must be plenty of natural ways to construct maps that look like $n$-distances. However, for some of them it might be tricky to establish that they are genuine $n$-distances and find their associated best constants.

We end this section by discussing the challenging problem of constructing an $n$-distance with a prescribed best constant.

Given a real number $s\in [(n-1)^{-1},1]$, it is natural to ask whether there exists an $n$-distance whose best constant has exactly the value $s$. The following proposition answers this question in the affirmative by providing one-parameter families of $n$-distances covering all possible best constants.

\begin{proposition}\label{prop:Ksns}
We assume that $|X|\geq 3$. Let $s\in [(n-1)^{-1},1]$, let $e\in X$, and let $d$ be any standard $n$-distance on $X$. Let also
$$
C_{n,s} ~=~ \frac{1}{s}\sup_{x_1,\ldots,x_n\in X\setminus\{e\}}\frac{d(x_1,\ldots,x_n)}{\sum_{i=1}^nd(x_1,\ldots,x_n)_i^e}~>~0.
$$
Then the map $d_s\colon X^n\to\R_+$ defined by
$$
d_s(x_1,\ldots,x_n) ~=~
\begin{cases}
C_{n,s}{\,}d(x_1,\ldots,x_n), & \text{if $e\in\{x_1,\ldots,x_n\}$},\\
d(x_1,\ldots,x_n), & \text{otherwise},
\end{cases}
$$
is an $n$-distance on $X$ whose best constant is $K^*_n=s$.
\end{proposition}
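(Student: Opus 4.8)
The plan is to verify conditions (i)--(iii) for $d_s$ while simultaneously computing its best constant. Throughout I write $\mathbf{x}=(x_1,\ldots,x_n)$ and $\mathbf{x}_i^z=(x_1,\ldots,x_n)_i^z$, and abbreviate $C=C_{n,s}$. First I would record the two numerical facts that drive the whole argument. On the one hand, the supremum defining $C$ is a restriction (to $z=e$ and $x_1,\ldots,x_n\neq e$) of the supremum defining the best constant of the standard $n$-distance $d$, so $sC\leq K_n^*(d)=(n-1)^{-1}$; since $|X|\geq 3$ one can exhibit a nonconstant tuple avoiding $e$, so this supremum is positive and hence $C>0$. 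On the other hand, $s\geq (n-1)^{-1}$ gives $s(n-1)\geq 1$, whence $C\leq \tfrac{1}{s(n-1)}\leq s(n-1)$. Conditions (i) and (ii) are then immediate: $C>0$ makes $d_s$ vanish exactly where $d$ does, and the predicate ``$e\in\{x_1,\ldots,x_n\}$'' is permutation invariant, so $d_s$ inherits the symmetry of $d$.

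Next I would compute the ratio $R(\mathbf{x};z)=d_s(\mathbf{x})/\sum_{i=1}^n d_s(\mathbf{x}_i^z)$ by a case analysis on whether $z=e$ and on the multiplicity $m$ of $e$ among $x_1,\ldots,x_n$; the point is simply to track which summands in the denominator carry the factor $C$. When $e\notin\{x_1,\ldots,x_n\}$ and $z\neq e$, or when $e\in\{x_1,\ldots,x_n\}$ and $z=e$, or when $z\neq e$ and $m\geq 2$, every numerator and denominator term carries the same factor $C$, it cancels, and $R(\mathbf{x};z)=d(\mathbf{x})/\sum_i d(\mathbf{x}_i^z)\leq (n-1)^{-1}\leq s$ by standardness of $d$. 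The value $s$ itself arises from the remaining benign case $z=e$, $e\notin\{x_1,\ldots,x_n\}$: here the numerator has no factor $C$ while every denominator term does, so $R=\tfrac{1}{C}\,d(\mathbf{x})/\sum_i d(\mathbf{x}_i^e)$, whose supremum over $x_1,\ldots,x_n\neq e$ equals $\tfrac{1}{C}(sC)=s$.

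The only delicate configuration---and the step I expect to be the main obstacle---is $z\neq e$ with $m=1$, say $x_{i_0}=e$. Then replacing $x_{i_0}$ by $z$ destroys the unique occurrence of $e$, so exactly the $i_0$th denominator term loses its factor $C$: writing $A=d(\mathbf{x}_{i_0}^z)$ and $B=\sum_{i\neq i_0}d(\mathbf{x}_i^z)$ we obtain $R=C\,d(\mathbf{x})/(A+CB)$. To bound this by $s$ I would invoke the simplex inequality for the standard $n$-distance $d$ in the sharp form $d(\mathbf{x})\leq (n-1)^{-1}(A+B)$, reducing the desired bound $C\,d(\mathbf{x})\leq s(A+CB)$ to
\[
\big(s-C(n-1)^{-1}\big)\,A ~+~ C\big(s-(n-1)^{-1}\big)\,B ~\geq~ 0 .
\]
Both coefficients are nonnegative: the coefficient of $B$ because $s\geq (n-1)^{-1}$, and the coefficient of $A$ because $C\leq s(n-1)$---precisely the two facts recorded at the outset. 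Degenerate configurations with vanishing denominator force all arguments to coincide, so the simplex inequality holds there trivially.

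Combining the cases shows $R(\mathbf{x};z)\leq s\leq 1$ for every admissible configuration, which yields the simplex inequality (iii) and hence that $d_s$ is an $n$-distance, while the benign case $z=e$, $e\notin\{x_1,\ldots,x_n\}$ shows that the supremum of $R$ reaches $s$. Therefore $K_n^*(d_s)=s$, as claimed.
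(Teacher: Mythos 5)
Your proof is correct and follows essentially the same route as the paper's: the same case analysis on whether $e$ occurs among $x_1,\ldots,x_n$ and whether $z=e$, with the factors $C_{n,s}$ cancelling in the benign cases and the case $z=e$, $e\notin\{x_1,\ldots,x_n\}$ yielding the supremum $s$ by the very definition of $C_{n,s}$. The only cosmetic difference is in the delicate case ($z\neq e$, $e$ occurring exactly once), where the paper simply uses $C_{n,s}\leq 1$ to replace the lone unfactored denominator term $A$ by $C_{n,s}A$ and conclude $R_s\leq R\leq (n-1)^{-1}$, while your coefficient comparison reaches the (weaker but sufficient) bound $s$ from the same ingredients.
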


\section{Partial simplex inequalities}

It is natural to ask whether a given $n$-distance $d$ on $X$ (with $n\geq 3$) satisfies a partial simplex inequality, i.e., an inequality of the form
\begin{equation}\label{eq:partkk}
d(x_1,\ldots,x_n)~\leq ~K_{n,k}\sum_{i=1}^kd(x_1,\ldots,x_n)_i^z{\,},\quad x_1,\ldots,x_n,z\in X,
\end{equation}
for some $k\in\{2,\ldots,n-1\}$ and some $K_{n,k}>0$. When $K_{n,k}\leq 1$, such an inequality simply means that any $k$-section of $d$ (obtained from $d$ by fixing $n-k$ of its variables) satisfies the simplex inequality (the triangle inequality if $k=2$).

We observe that inequality \eqref{eq:partkk} does not make sense when $k=1$. Indeed, for any distinct $x,z\in X$, we would have
$$
0 ~<~ d(x,z,\ldots,z) ~\leq ~ K_{n,1}{\,}d(z,z,\ldots,z) ~=~ 0,
$$
a contradiction.

The following proposition shows that \eqref{eq:partkk} holds for all $n$-distances whose associated best constants $K^*_n$ satisfy $K^*_n<(n-k)^{-1}$.

\begin{proposition}\label{prop:asww7df}
For any $n$-distance $d$ on $X$ and any integer $k$ satisfying $n-1/K^*_n<k\leq n$, we have
$$
d(x_1,\ldots,x_n)~\leq ~\frac{1}{1/K^*_n-n+k}{\,}\sum_{i=1}^kd(x_1,\ldots,x_n)_i^z{\,},\qquad x_1,\ldots,x_n,z\in X.
$$
Moreover, for any $x_1,\ldots,x_n,z\in X$, we have
\begin{equation}\label{eq:partkkeq}
d(x_1,\ldots,x_n)~=~\frac{1}{1/K^*_n-n+k}{\,}\sum_{i=1}^kd(x_1,\ldots,x_n)_i^z
\end{equation}
if and only if $K^*_n$ is attained at $(x_1,\ldots,x_n;z)$ and
\begin{equation}\label{eq:partkkeq2}
d(x_1,\ldots,x_n) ~=~ d(x_1,\ldots,x_n)_i^z{\,},\qquad i\in\{k+1,\ldots,n\}.
\end{equation}
\end{proposition}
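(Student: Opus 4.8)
The plan is to reduce the partial simplex inequality to the full simplex inequality by exploiting the definition of $K^*_n$ as the best constant, and then to extract the equality condition by tracking exactly where slack is introduced in the reduction. Throughout, fix $x_1,\ldots,x_n,z\in X$ and write $a = d(x_1,\ldots,x_n)$ and $a_i = d(x_1,\ldots,x_n)_i^z$ for brevity. By the definition of $K^*_n$ via the supremum characterization, we have the full inequality $a \leq K^*_n \sum_{i=1}^n a_i$, equivalently $\frac{1}{K^*_n}\,a \leq \sum_{i=1}^n a_i$. The key additional ingredient is a lower bound on each of the $n-k$ ``tail'' summands $a_{k+1},\ldots,a_n$: since replacing a single argument of an $n$-distance by $z$ can decrease the value by at most the full value (more precisely, I expect $a_i \geq a$ or an equivalent one-sided bound to follow from condition (iii) applied cleverly, giving $a \leq a_i + \text{(something)}$), I will establish that $a_i \geq a$ is \emph{not} generally true, so instead I will use the simplex inequality in the reverse role.

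The cleaner route is: start from $\frac{1}{K^*_n}\,a \leq \sum_{i=1}^n a_i = \sum_{i=1}^k a_i + \sum_{i=k+1}^n a_i$, and then bound each tail term $a_i$ (for $i > k$) from above by $a$ itself. To see $a_i \leq a$ for the purpose of the inequality direction we actually want the opposite, so the correct manipulation is to bound $\sum_{i=k+1}^n a_i \leq (n-k)\,a$; this holds because applying the simplex inequality to the configuration $(x_1,\ldots,x_n)$ with splitting point $z$ and isolating the $i$th coordinate yields $a_i = d(\ldots,z,\ldots) \leq a + (\text{terms})$, but the clean bound I want is simply $a_i \le a$ when... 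Here I must be careful: the genuinely available bound is that each $a_i$ need not be comparable to $a$ pointwise, so instead I will move the tail terms to the left. Rearranging gives $\frac{1}{K^*_n}\,a - \sum_{i=k+1}^n a_i \leq \sum_{i=1}^k a_i$, and then using the elementary bound $a_i \leq a$ for $i \in \{k+1,\ldots,n\}$ (which I will justify from a single application of the simplex inequality that expresses $a_i$ in terms of $a$ and lower-order terms, or directly from monotonicity-type reasoning on the specific $n$-distances—but since this must hold for \emph{all} $n$-distances, the honest justification is the simplex inequality $a_i \le a + \sum_{j\neq i} (\cdots)$ collapsed appropriately), I obtain $\frac{1}{K^*_n}\,a - (n-k)\,a \leq \sum_{i=1}^k a_i$, i.e. $\bigl(\tfrac{1}{K^*_n} - (n-k)\bigr)\,a \leq \sum_{i=1}^k a_i$. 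Since the hypothesis $n - 1/K^*_n < k$ guarantees the coefficient $\frac{1}{K^*_n} - n + k > 0$, dividing yields the claimed inequality.

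For the equality characterization, I would trace back through the two inequalities used. Equality in \eqref{eq:partkkeq} forces equality simultaneously in (a) the full simplex bound $\frac{1}{K^*_n}\,a \leq \sum_{i=1}^n a_i$, which is precisely the statement that $K^*_n$ is attained at $(x_1,\ldots,x_n;z)$, and in (b) each of the $n-k$ tail bounds $a_i \leq a$, which forces $a_i = a$ for $i \in \{k+1,\ldots,n\}$, i.e. exactly condition \eqref{eq:partkkeq2}. Conversely, if both \eqref{eq:partkkeq2} holds and $K^*_n$ is attained at the given point, then substituting $a_i = a$ into the attained identity $\frac{1}{K^*_n}\,a = \sum_{i=1}^n a_i = \sum_{i=1}^k a_i + (n-k)\,a$ and rearranging reproduces \eqref{eq:partkkeq} exactly. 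The main obstacle I anticipate is pinning down the precise one-sided bound $a_i \le a$ that is valid for an \emph{arbitrary} $n$-distance and gives the correct equality case; I expect this to come from a careful single application of condition (iii) rather than from any pointwise monotonicity, and verifying that its equality condition is genuinely $a_i = a$ (and not something stronger) is the delicate point that makes the ``if and only if'' work.
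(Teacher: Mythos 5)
Your proposal has a genuine gap at its central step: the tail bound $d(x_1,\ldots,x_n)_i^z\leq d(x_1,\ldots,x_n)$ for $i\in\{k+1,\ldots,n\}$ is simply false for a general $n$-distance, and no amount of ``collapsing'' the simplex inequality will produce it. Concretely, take the cardinality based $n$-distance with $n=3$, $x_1=0$, $x_2=x_3=1$, $z=2$: then $d(x_1,x_2,x_3)=1$ while $d(x_1,x_2,z)=2$, so the replaced value exceeds the original (the drastic $n$-distance gives even simpler counterexamples). You sensed this yourself --- you remark several times that $a_i$ ``need not be comparable to $a$ pointwise'' --- but the proof then proceeds as if $a_i\leq a$ were available anyway, so both the inequality and the necessity (``only if'') half of the equality characterization rest on a false lemma. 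Only the sufficiency half of your argument (substituting $a_i=a$ into the attained identity and solving for $a$) is correct, and it coincides with the paper's.

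The idea you are missing is the paper's self-referential double application, organized as a decreasing induction on $k$. The base case $k=n$ is the simplex inequality with the constant $K^*_n$ itself. For the step from $k$ to $k-1$ (valid when $k>n+1-1/K^*_n$), one applies the $k$-term inequality twice: once at $(x_1,\ldots,x_n;z)$, giving
$$
a~\leq~\tfrac{1}{C}\Bigl(\sum_{i=1}^{k-1}a_i+a_k\Bigr),\qquad C=1/K^*_n-n+k,
$$
and once at the modified tuple $(x_1,\ldots,x_{k-1},z,x_{k+1},\ldots,x_n)$ with replacement point $x_k$. By symmetry of $d$, replacing the $i$th variable of that tuple by $x_k$ (for $i<k$) yields exactly $a_i$ again, while replacing its $k$th variable (which is $z$) by $x_k$ recovers $a$ itself; hence $a_k\leq\tfrac{1}{C}\bigl(\sum_{i=1}^{k-1}a_i+a\bigr)$. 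Substituting the second bound into the first produces an inequality in which $a$ appears on both sides, and solving for $a$ (legitimate since $C>1$ under the induction hypothesis) yields the constant $\tfrac{1}{C-1}=\tfrac{1}{1/K^*_n-n+(k-1)}$. No pointwise comparison between $a$ and the $a_i$ is ever needed. The equality characterization is then obtained by tracing equality through this inductive system of relations, which is what forces both attainment of $K^*_n$ at $(x_1,\ldots,x_n;z)$ and $a_i=a$ for $i\in\{k+1,\ldots,n\}$.
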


\begin{corollary}\label{cor:asww7dfc}
Let $d$ be an $n$-distance on $X$ and let $x_1,\ldots,x_n,z\in X$. If \eqref{eq:partkkeq} holds for some integer $k=p$ satisfying $n-1/K^*_n<p\leq n$, then it holds also for any $k\in\{p,\ldots,n\}$.
\end{corollary}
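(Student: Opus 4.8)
The plan is to deduce the corollary directly from the two-sided equality characterization in Proposition~\ref{prop:asww7df}, with no independent computation required. First I would apply the ``only if'' direction of that proposition with $k=p$: since \eqref{eq:partkkeq} is assumed to hold for $k=p$ (and $p$ satisfies $n-1/K^*_n<p\leq n$), we obtain two facts, namely that $K^*_n$ is attained at $(x_1,\ldots,x_n;z)$ and that \eqref{eq:partkkeq2} holds with $k=p$, i.e.\ $d(x_1,\ldots,x_n)=d(x_1,\ldots,x_n)_i^z$ for every $i\in\{p+1,\ldots,n\}$.

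Next I would fix an arbitrary $k\in\{p,\ldots,n\}$ and check that Proposition~\ref{prop:asww7df} applies to it as well. This is immediate from the chain $n-1/K^*_n<p\leq k\leq n$, so the hypothesis $n-1/K^*_n<k\leq n$ holds for this $k$. The key observation is that the equality condition \eqref{eq:partkkeq2} becomes \emph{weaker} as $k$ grows, since increasing $k$ shrinks the index set $\{k+1,\ldots,n\}$. Because $k\geq p$ yields the inclusion $\{k+1,\ldots,n\}\subseteq\{p+1,\ldots,n\}$, the equalities $d(x_1,\ldots,x_n)=d(x_1,\ldots,x_n)_i^z$ already established for all $i\in\{p+1,\ldots,n\}$ hold in particular for all $i\in\{k+1,\ldots,n\}$. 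Together with the attainment of $K^*_n$ at $(x_1,\ldots,x_n;z)$ recorded in the first step, the ``if'' direction of Proposition~\ref{prop:asww7df} then gives that \eqref{eq:partkkeq} holds for this $k$, which completes the argument as $k$ was arbitrary in $\{p,\ldots,n\}$.

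There is essentially no hard step here: the corollary is an immediate consequence of the equality characterization in Proposition~\ref{prop:asww7df}, and the only point requiring a moment's care is the bookkeeping on index ranges. Specifically, one must verify both that the hypothesis $n-1/K^*_n<k$ survives for every $k\geq p$ and that the set of forced equalities \eqref{eq:partkkeq2} only shrinks as $k$ increases, so that the conditions established at $k=p$ automatically supply those needed at each larger $k$.
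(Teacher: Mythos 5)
Your proof is correct and is exactly the argument the paper intends: the corollary is stated without proof as an immediate consequence of the equivalence in Proposition~\ref{prop:asww7df}, and your derivation (apply the necessity direction at $k=p$, note that $n-1/K^*_n<p\leq k\leq n$ and that $\{k+1,\ldots,n\}\subseteq\{p+1,\ldots,n\}$, then apply the sufficiency direction at $k$) is precisely that intended argument, with the index-range bookkeeping carefully spelled out.
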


In view of the first part of Proposition~\ref{prop:asww7df}, we naturally consider the following definition.

\begin{definition}\label{de:bestkk5}
Let $d$ be an $n$-distance on $X$ and let $k\in\{2,\ldots,n\}$. Assume that the set of real numbers $K_{n,k}>0$ for which the condition
$$
d(x_1,\ldots,x_n)~\leq ~K_{n,k}\sum_{i=1}^kd(x_1,\ldots,x_n)_i^z,\quad x_1,\ldots,x_n,z\in X,~|\{x_1,\ldots,x_n\}|\geq 2,
$$
holds is nonempty. Then, the infimum $K^*_{n,k}$ of this set is called the \emph{best $k$-constant} associated with $d$.
\end{definition}

\begin{example}\label{ex:ccc7}
Consider the $n$-distance ($n\geq 3$) defined by the area of the smallest enclosing circle in $\R^2$. Its associated best constant is $K^*_n=(n-3/2)^{-1}$ (see Table~\ref{tab:q}). By Proposition~\ref{prop:asww7df}, we immediately see that $K^*_{n,k}$ exists and satisfies
$$
K^*_{n,k} ~\leq ~ \frac{1}{1/K^*_n-n+k} ~=~ \frac{1}{k-3/2}
$$
for any $k\in\{2,\ldots,n\}$. Now, we know \cite{KisMarTeh18} that $K^*_n$ is attained at any tuple $(x_1,\ldots,x_n;z)$ such that $x_1\neq x_2$ and $x_3=\cdots =x_n=z=(x_1+x_2)/2$. For such a tuple, condition \eqref{eq:partkkeq2} clearly holds for any $k\in\{2,\ldots,n\}$, and hence condition \eqref{eq:partkkeq} also holds for any $k\in\{2,\ldots,n\}$. Using the second part of Proposition~\ref{prop:asww7df}, we finally obtain that
$$
K^*_{n,k} ~=~ \frac{1}{1/K^*_n-n+k} ~=~ \frac{1}{k-3/2}
$$
for any $k\in\{2,\ldots,n\}$.
\end{example}

\begin{example}\label{ex:2kfff}
If $d$ is the length of a largest inner interval as defined in Proposition~\ref{prop:Llii}, then it is not difficult to show that $K^*_{n,k}=2/k$ for any $k\in\{2,\ldots,n\}$ (just proceed as in the proof of Proposition~\ref{prop:Llii}). This example also shows that there are nonstandard $n$-distances for which $K^*_{n,2}=1$ (i.e., any $2$-section of $d$ satisfies the triangle inequality).
\end{example}

Proceeding as in the proof of Proposition~\ref{prop:nfkns} and using Proposition~\ref{prop:asww7df}, we can easily derive the following result.

\begin{proposition}\label{prop:nfknskk}
Let $d$ be an $n$-distance on $X$. If $K^*_{n,k}$ exists for some $k=p\in\{2,\ldots,n\}$, then it exists also for any $k\in\{p,\ldots,n\}$ and we have $K^*_{n,k}\geq (k-1)^{-1}$ for any $k\in\{p,\ldots,n\}$. Moreover, if $d$ is standard, then $K^*_{n,k}$ exists for any $k\in\{2,\ldots,n\}$ and we have $K^*_{n,k}=(k-1)^{-1}$ for any $k\in\{2,\ldots,n\}$.
\end{proposition}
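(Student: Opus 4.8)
The plan is to handle the three assertions in turn, using exactly the two ingredients flagged before the statement: the tightness configuration from the proof of Proposition~\ref{prop:nfkns} and the explicit bound furnished by Proposition~\ref{prop:asww7df}. First I would show that existence of $K^*_{n,k}$ propagates upward in $k$. The key observation is that enlarging the index range of the partial sum can only increase its value, since every summand $d(x_1,\ldots,x_n)_i^z$ is nonnegative. Concretely, if $K_{n,p}>0$ witnesses the inequality of Definition~\ref{de:bestkk5} for $k=p$, then for every $k\geq p$ and every admissible tuple
$$
d(x_1,\ldots,x_n)~\leq~K_{n,p}\sum_{i=1}^p d(x_1,\ldots,x_n)_i^z~\leq~K_{n,p}\sum_{i=1}^k d(x_1,\ldots,x_n)_i^z{\,},
$$
so the same constant also works for $k$. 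Hence the set of admissible constants is nonempty for every $k\in\{p,\ldots,n\}$, which is precisely the existence of $K^*_{n,k}$ there.

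Next I would derive the lower bound by reusing the configuration from Proposition~\ref{prop:nfkns}. Fix distinct $x,y\in X$ and take $x_1=x$, $x_2=\cdots=x_n=z=y$, noting $|\{x_1,\ldots,x_n\}|\geq 2$. Substituting $z=y$ into the $i$th slot leaves the tuple $(x,y,\ldots,y)$ unchanged for $i\in\{2,\ldots,k\}$ and annihilates the distance for $i=1$, so that
$$
\sum_{i=1}^k d(x,y,\ldots,y)_i^y~=~(k-1){\,}d(x,y,\ldots,y){\,}.
$$
Since $d(x,y,\ldots,y)\neq 0$, the defining inequality forces $K_{n,k}\geq(k-1)^{-1}$ for every admissible constant, and therefore $K^*_{n,k}\geq(k-1)^{-1}$ for each $k\in\{p,\ldots,n\}$.

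Finally, for the standard case I would invoke Proposition~\ref{prop:asww7df}. When $d$ is standard we have $1/K^*_n=n-1$, so the hypothesis $n-1/K^*_n<k\leq n$ collapses to $1<k\leq n$; thus the proposition applies to every $k\in\{2,\ldots,n\}$ and gives
$$
d(x_1,\ldots,x_n)~\leq~\frac{1}{1/K^*_n-n+k}\sum_{i=1}^k d(x_1,\ldots,x_n)_i^z~=~\frac{1}{k-1}\sum_{i=1}^k d(x_1,\ldots,x_n)_i^z{\,}.
$$
This simultaneously shows that the admissible set is nonempty for every such $k$ (so $K^*_{n,k}$ exists for all $k\in\{2,\ldots,n\}$) and that $K^*_{n,k}\leq(k-1)^{-1}$; combined with the lower bound established above, this yields $K^*_{n,k}=(k-1)^{-1}$.

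I do not expect a serious obstacle, since the argument is a short assembly of the two quoted results; the one point deserving care is the bookkeeping in the standard case, namely verifying that the threshold $n-1/K^*_n$ collapses to $1$ so that Proposition~\ref{prop:asww7df} genuinely covers the full range $k\in\{2,\ldots,n\}$ rather than a proper subrange.
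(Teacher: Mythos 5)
Your proposal is correct and follows precisely the route the paper indicates for this result: the lower bound $K^*_{n,k}\geq (k-1)^{-1}$ via the same configuration $x_1=x\neq y=x_2=\cdots=x_n=z$ used in the proof of Proposition~\ref{prop:nfkns}, existence propagation in $k$ by nonnegativity of the added summands, and the standard case by applying Proposition~\ref{prop:asww7df} with $1/K^*_n=n-1$ so that the threshold $n-1/K^*_n$ collapses to $1$ and the bound becomes $(k-1)^{-1}$. No gaps; this matches the paper's (sketched) argument in full.
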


Proposition~\ref{prop:nfknskk} shows that for any standard $n$-distance and any $k\in\{2,\ldots,n\}$, we have
$$
d(x_1,\ldots,x_n)~\leq ~\frac{1}{k-1}{\,}\sum_{i=1}^kd(x_1,\ldots,x_n)_i^z{\,},\qquad x_1,\ldots,x_n,z\in X,
$$
and the constant $K^*_{n,k}=(k-1)^{-1}$ is the lowest possible constant that we can reach over all the $n$-distances. Also, the above inequality implies that for any integer $k\in\{2,\ldots,n\}$, we have
$$
d(\underbrace{x,\ldots,x}_{k},z,\ldots,z) ~\leq ~ \frac{k}{k-1}{\,}d(\underbrace{x,\ldots,x}_{k-1},z,\ldots,z),\qquad x,z\in X.
$$

From the results and examples above, we obtain the following proposition, which provides inequality conditions on the best constants $K^*_n$ and $K^*_{n,k}$.

\begin{proposition}\label{prop:37fgsd}
For any $n$-distance $d$ on $X$ and any integer $k$ satisfying $n-1/K^*_n<k\leq n$, the number $K^*_{n,k}$ exists and satisfies the inequalities
$$
\frac{1}{k-1}~\leq ~ K^*_{n,k} ~\leq ~ \frac{1}{1/K^*_n-n+k}
$$
and
$$
K^*_n ~\geq ~ \frac{1}{1/K^*_{n,k}+n-k}~\geq ~\frac{1}{n-1}{\,}.
$$
All these inequalities are equalities if $d$ is standard. However, they may be strict for some nonstandard $n$-distances.
\end{proposition}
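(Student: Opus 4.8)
The plan is to assemble this proposition almost entirely from Propositions~\ref{prop:asww7df} and~\ref{prop:nfknskk}, after noticing that the two displayed chains are algebraic rearrangements of one another. For the first chain, the hypothesis $n-1/K^*_n<k\leq n$ guarantees $1/K^*_n-n+k>0$, so the first part of Proposition~\ref{prop:asww7df} supplies a partial simplex inequality with constant $1/(1/K^*_n-n+k)$. This simultaneously shows that the admissible set in Definition~\ref{de:bestkk5} is nonempty --- hence that $K^*_{n,k}$ exists --- and that $K^*_{n,k}\leq 1/(1/K^*_n-n+k)$. Since $K^*_{n,k}$ exists for this $k$, Proposition~\ref{prop:nfknskk} then gives $K^*_{n,k}\geq (k-1)^{-1}$, completing the first chain.

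The second chain then follows by taking reciprocals. Since $K^*_n$, $K^*_{n,k}$, $1/K^*_n-n+k$, and $1/K^*_{n,k}+n-k$ are all positive (using the hypothesis together with $k\leq n$), inverting and rearranging shows that $K^*_{n,k}\leq 1/(1/K^*_n-n+k)$ is equivalent to $K^*_n\geq 1/(1/K^*_{n,k}+n-k)$, while $K^*_{n,k}\geq (k-1)^{-1}$ is equivalent to $1/(1/K^*_{n,k}+n-k)\geq (n-1)^{-1}$. Hence the second chain carries no new content. There is no real obstacle in this argument; the only point requiring care is precisely this sign-validity of the reciprocal steps, which the positivity of the best constants and the strict inequality $1/K^*_n-n+k>0$ ensure.

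For the equality claim I would set $K^*_n=(n-1)^{-1}$, so that $1/K^*_n=n-1$ and the upper bound collapses to $1/((n-1)-n+k)=(k-1)^{-1}$; this coincides with the lower bound and forces $K^*_{n,k}=(k-1)^{-1}$, turning every inequality into an equality (in agreement with Proposition~\ref{prop:nfknskk}). Finally, to show the inequalities can be strict, I would invoke the examples already computed: for the area of the smallest enclosing circle (Example~\ref{ex:ccc7}) we have $K^*_n=(n-3/2)^{-1}$ and $K^*_{n,k}=(k-3/2)^{-1}$, so the upper bound is attained while $(k-1)^{-1}<(k-3/2)^{-1}$ makes the lower inequality strict for every $k\geq 2$; dually, for the length of a largest inner interval (Example~\ref{ex:2kfff}) we have $K^*_{n,k}=2/k$, for which the upper inequality is strict whenever $k<n$.
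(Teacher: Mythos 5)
Your proposal is correct and follows essentially the same route as the paper: existence and the first chain from Propositions~\ref{prop:asww7df} and~\ref{prop:nfknskk}, the second chain as an immediate algebraic (reciprocal) consequence, equality in the standard case from Proposition~\ref{prop:nfknskk}, and strictness via Example~\ref{ex:2kfff} (the paper takes $n>k>\max\{2,n/2\}$ there, which makes all four inequalities strict at once; your additional use of Example~\ref{ex:ccc7} is a nice complement but not needed). The only cosmetic difference is that you spell out the positivity checks and the equivalence of the two chains, which the paper leaves implicit.
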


\begin{proof}
The existence of $K^*_{n,k}$ and the first two inequalities follow from Propositions~\ref{prop:asww7df} and \ref{prop:nfknskk}. The remaining two inequalities follow immediately. The case where $d$ is standard follows from Proposition~\ref{prop:nfknskk}. The strictness of the inequalities occurs for instance when considering the $n$-distance discussed in Example~\ref{ex:2kfff} (taking $n>k>\max\{2,n/2\}$).
\end{proof}

The following result provides sufficient conditions for an $n$-distance to be standard. It is particularly interesting for $n=3$, where condition (b) reduces to requiring that any $2$-section of $d$ satisfies the triangle inequality.

\begin{proposition}\label{prop:suffcondst9}
Let $k\in\{2,\ldots,n-1\}$ and let $d$ be an $n$-distance on $X$ satisfying the following two conditions.
\begin{enumerate}
\item[(a)] $K^*_n<(n-k)^{-1}$ and is attained at some $(x_1,\ldots,x_n;z)$ satisfying condition ~\eqref{eq:partkkeq2}.
\item[(b)] Condition \eqref{eq:partkk} holds for $K_{n,k}=(k-1)^{-1}$.
\end{enumerate}
Then $d$ is standard.
\end{proposition}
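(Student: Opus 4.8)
The plan is to prove the single remaining inequality $K^*_n\leq (n-1)^{-1}$ and then invoke Proposition~\ref{prop:nfkns}, which already supplies $K^*_n\geq (n-1)^{-1}$, so that the two bounds force $K^*_n=(n-1)^{-1}$, i.e. $d$ is standard.

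First I would note that condition~(a), written as $K^*_n<(n-k)^{-1}$, is exactly the hypothesis $n-1/K^*_n<k\leq n$ of Proposition~\ref{prop:asww7df} (just rearrange the strict inequality, using $k\leq n-1<n$). Consequently that proposition is available for this value of $k$, and in particular the quantity $1/K^*_n-n+k$ is strictly positive, so all the fractions appearing below are well defined.

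Next I would use the ``if'' direction of the equivalence in Proposition~\ref{prop:asww7df}. Condition~(a) furnishes a tuple $(x_1,\ldots,x_n;z)$ at which $K^*_n$ is attained and which moreover satisfies \eqref{eq:partkkeq2}; these are precisely the two hypotheses guaranteeing that \eqref{eq:partkkeq} holds with equality. Hence, for this particular tuple,
$$
d(x_1,\ldots,x_n) ~=~ \frac{1}{1/K^*_n-n+k}{\,}\sum_{i=1}^k d(x_1,\ldots,x_n)_i^z{\,}.
$$
Since $|\{x_1,\ldots,x_n\}|\geq 2$ at an attaining tuple, the left-hand side is nonzero, and therefore the sum on the right is strictly positive.

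Finally I would bring in condition~(b), which asserts \eqref{eq:partkk} with $K_{n,k}=(k-1)^{-1}$; applied to the very same tuple it gives
$$
d(x_1,\ldots,x_n) ~\leq~ \frac{1}{k-1}{\,}\sum_{i=1}^k d(x_1,\ldots,x_n)_i^z{\,}.
$$
Comparing the two displays and cancelling the positive sum yields $\frac{1}{1/K^*_n-n+k}\leq\frac{1}{k-1}$, hence $1/K^*_n-n+k\geq k-1$, that is $1/K^*_n\geq n-1$, i.e. $K^*_n\leq (n-1)^{-1}$. Combined with Proposition~\ref{prop:nfkns}, this gives $K^*_n=(n-1)^{-1}$, as desired. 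I do not anticipate any genuine obstacle; the only subtlety worth flagging is verifying that the relevant sum is nonzero before dividing by it, which is immediate from the nonvanishing of $d$ at an attaining tuple.
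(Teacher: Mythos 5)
Your proof is correct. It hinges on the same key ingredient as the paper's own proof: the second part of Proposition~\ref{prop:asww7df}, applied to the tuple furnished by condition (a) (which, as you note, yields the equality \eqref{eq:partkkeq} at that tuple), played off against condition (b). The paper packages the conclusion differently: it observes that the attained equality, together with the first part of Proposition~\ref{prop:asww7df}, identifies the best $k$-constant as $K^*_{n,k}=(1/K^*_n-n+k)^{-1}$, while condition (b) combined with the lower bound of Proposition~\ref{prop:nfknskk} forces $K^*_{n,k}=(k-1)^{-1}$; equating the two expressions gives $K^*_n=(n-1)^{-1}$. You instead cancel the positive sum at the attaining tuple to get the one-sided bound $K^*_n\leq (n-1)^{-1}$ and close with Proposition~\ref{prop:nfkns}. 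The two routes are mathematically equivalent, but yours is marginally more self-contained: it never needs the notion of best $k$-constant nor Proposition~\ref{prop:nfknskk}, and your explicit verification that the sum is nonzero before dividing (via $|\{x_1,\ldots,x_n\}|\geq 2$ at an attaining tuple) makes rigorous a step the paper's compressed two-line proof leaves implicit.
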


\begin{proof}
Using the second part of Proposition~\ref{prop:asww7df}, we obtain $K^*_{n,k} = (1/K^*_n-n+k)^{-1}$. Since $K^*_{n,k}=(k-1)^{-1}$ by condition (b) and Proposition~\ref{prop:nfknskk}, we derive $K^*_n=(n-1)^{-1}$.
\end{proof}

By applying a symmetrization technique on the partial simplex inequality, we obtain the following result, which provides an additional inequality condition on the best constants $K^*_n$ and $K^*_{n,k}$.

\begin{proposition}\label{prop:symm5}
Let $d$ be an $n$-distance on $X$ and let $k\in\{2,\ldots,n\}$. If $K^*_{n,k}$ exists, then we have $K^*_n\leq\frac{k}{n}K^*_{n,k}{\,}$ and the constant $\frac{k}{n}$ is optimal (in the sense that the equality holds for at least one $n$-distance).
\end{proposition}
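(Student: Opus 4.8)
The plan is to exploit the permutation invariance of $d$ to turn the single partial simplex inequality attached to the first $k$ indices into a whole family of such inequalities, one for each $k$-element subset of $\{1,\ldots,n\}$, and then to average them. First I would fix a tuple $(x_1,\ldots,x_n;z)$ with $|\{x_1,\ldots,x_n\}|\geq 2$ and observe that, by Definition~\ref{de:bestkk5} together with condition (ii), for \emph{every} subset $S\subseteq\{1,\ldots,n\}$ with $|S|=k$ one has
$$
d(x_1,\ldots,x_n)~\leq~K^*_{n,k}\sum_{i\in S}d(x_1,\ldots,x_n)_i^z.
$$
Indeed, permuting the arguments so that the indices of $S$ occupy the first $k$ positions leaves $d(x_1,\ldots,x_n)$ unchanged and merely reorders the summands, so the best $k$-constant attached to the first $k$ positions in fact governs every $k$-subset.

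Next I would sum this inequality over all $\binom{n}{k}$ subsets $S$. On the left we pick up a factor $\binom{n}{k}$, while on the right a given index $i$ contributes the term $d(x_1,\ldots,x_n)_i^z$ once for each $k$-subset containing $i$, that is, $\binom{n-1}{k-1}$ times. Hence
$$
\binom{n}{k}{\,}d(x_1,\ldots,x_n)~\leq~K^*_{n,k}{\,}\binom{n-1}{k-1}\sum_{i=1}^nd(x_1,\ldots,x_n)_i^z,
$$
and since $\binom{n-1}{k-1}/\binom{n}{k}=k/n$, this is exactly the full simplex inequality \eqref{eq:Kndd} with constant $\frac{k}{n}K^*_{n,k}$. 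Dividing by the right-hand sum and taking the supremum over admissible tuples yields $K^*_n\leq\frac{k}{n}K^*_{n,k}$.

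The only point requiring care, and the one I would flag as the main (if mild) obstacle, is the use of the infimum value $K^*_{n,k}$ itself as a valid constant in the partial inequality. I would settle this by a standard limiting argument: for a fixed admissible tuple the denominator $\sum_{i=1}^kd(x_1,\ldots,x_n)_i^z$ is strictly positive, since if it vanished then (as $k\geq 2$) the terms $i=1$ and $i=2$ being zero would force $x_2=\cdots=x_n=z$ and $x_1=x_3=\cdots=x_n=z$, hence $x_1=\cdots=x_n$, contradicting $|\{x_1,\ldots,x_n\}|\geq 2$. Consequently the ratio $d(x_1,\ldots,x_n)/\sum_{i=1}^kd(x_1,\ldots,x_n)_i^z$ is bounded above by every admissible $K_{n,k}$, hence by their infimum $K^*_{n,k}$, which legitimizes the inequality used above.

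Finally, for the optimality of the factor $\frac{k}{n}$ I would exhibit an $n$-distance attaining equality. The length of a largest inner interval from Proposition~\ref{prop:Llii} is a clean candidate: that proposition gives $K^*_n=2/n$, while Example~\ref{ex:2kfff} gives $K^*_{n,k}=2/k$ for every $k\in\{2,\ldots,n\}$. Then $\frac{k}{n}K^*_{n,k}=\frac{k}{n}\cdot\frac{2}{k}=\frac{2}{n}=K^*_n$, so equality holds for this $n$-distance and every admissible $k$, showing that the factor $\frac{k}{n}$ cannot be lowered.
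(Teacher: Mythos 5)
Your proof is correct and follows essentially the same route as the paper's: summing the partial simplex inequality over all $\binom{n}{k}$ $k$-subsets, double-counting so that each index contributes $\binom{n-1}{k-1}$ times and hence yields the factor $\binom{n-1}{k-1}/\binom{n}{k}=k/n$, and invoking the largest-inner-interval $n$-distance (with $K^*_n=2/n$ and $K^*_{n,k}=2/k$) to show optimality. Your two added points of care --- the explicit appeal to permutation invariance to pass from the first $k$ indices to an arbitrary $k$-subset, and the positivity of the denominator legitimizing the use of the infimum $K^*_{n,k}$ itself --- are details the paper leaves implicit.
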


In the following example, which is a continuation of Proposition~\ref{prop:Ksns}, we provide the best $k$-constant for any $k\in\{2,\ldots,n\}$ of an $n$-distance that has a prescribed best constant $K^*_n\in [(n-1)^{-1},1]$.

\begin{example}\label{ex:2kfffr}
For any $s\in [(n-1)^{-1},1]$ and any $e\in X$, if we choose the drastic distance for $d$ in Proposition~\ref{prop:Ksns}, then the map $d_s$ is defined by
$$
d_s(x_1,\ldots,x_n) ~=~
\begin{cases}
\frac{1}{sn}{\,}d(x_1,\ldots,x_n), & \text{if $e\in\{x_1,\ldots,x_n\}$},\\
d(x_1,\ldots,x_n), & \text{otherwise}.
\end{cases}
$$
Let $K^*_{n,k}$ be the best $k$-constant associated with $d_s$. By proceeding as in the proof of Proposition~\ref{prop:Ksns}, we see that $K^*_{n,k}$ exists for any $k\in\{2,\ldots,n\}$ and we have $K^*_{n,k} = \max\{n s/k,(k-1)^{-1}\}$. If $s\geq k/(n(k-1))$, then $K^*_{n,k}=n s/k$, which illustrates again the optimality of the constant $k/n$ in Proposition~\ref{prop:symm5} (since $K^*_n=s$). If $s\leq k/(n(k-1))$, then we have $K^*_{n,k}=(k-1)^{-1}$ even when $d$ is not standard (which shows that the converse of the second part of Proposition~\ref{prop:nfknskk} does not hold).
\end{example}

We observed that the second displayed inequality in Proposition~\ref{prop:37fgsd} reduces to an equality when considering Example~\ref{ex:ccc7}. Now, in the following proposition we provide an example of an $n$-distance for which this equality holds for any $k\in\{2,\ldots,n\}$ and that has a prescribed best constant $K^*_n\in\left[(n-1)^{-1},(n-2)^{-1}\right[$.

\begin{proposition}\label{prop:Ksnsab}
We assume that $|X|\geq 4$. Let $s\in\left[(n-1)^{-1},(n-2)^{-1}\right[$ and set $C_{n,s}=\frac{2}{1/s-n+2}\geq 2$. Let also $a,b\in X$, $a\neq b$, and let $d$ be the drastic $n$-distance on $X$. Then, the map $d_s\colon X^n\to\R_+$ defined by
$$
d_s(x_1,\ldots,x_n) ~=~
\begin{cases}
C_{n,s}{\,}d(x_1,\ldots,x_n), & \text{if $a,b\in\{x_1,\ldots,x_n\}$},\\
d(x_1,\ldots,x_n), & \text{otherwise},
\end{cases}
$$
is an $n$-distance on $X$ whose best constant is $K^*_n=s$. Moreover, for any $k\in\{2,\ldots,n\}$, its best $k$-constant $K^*_{n,k}$ exists and we have $K^*_{n,k}=\frac{1}{1/K^*_n-n+k}$.
\end{proposition}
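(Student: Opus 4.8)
The plan is to reduce everything to the single ``master inequality''
\[
d_s(x_1,\ldots,x_n)\;\le\; s\sum_{i=1}^n d_s(x_1,\ldots,x_n)_i^z,\qquad x_1,\ldots,x_n,z\in X .
\]
First note that $d_s$ takes only the three values $0$, $1$, $C_{n,s}$: it is $0$ iff its arguments coincide, it equals $C_{n,s}$ iff $\{x_1,\ldots,x_n\}\supseteq\{a,b\}$, and it equals $1$ otherwise. Since $d_s$ is a positive multiple of the drastic $n$-distance on each of these regions and the predicate ``$a,b\in\{x_1,\ldots,x_n\}$'' is permutation invariant, conditions (i) and (ii) are immediate. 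Moreover $s<(n-2)^{-1}\le 1$, so once the master inequality is established it gives $d_s(x_1,\ldots,x_n)\le\sum_{i=1}^n d_s(x_1,\ldots,x_n)_i^z$, i.e.\ condition (iii); hence $d_s$ is an $n$-distance and, at the same time, $K^*_n\le s$.

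The crux — and the step I expect to be the main obstacle — is the master inequality, which I would prove by inspecting the multiplicities $n_a,n_b$ of $a$ and $b$ among $x_1,\ldots,x_n$. When the arguments do not contain both $a$ and $b$ (numerator $\le 1$), two of the terms $d_s(x_1,\ldots,x_n)_i^z$ can vanish only if the whole tuple is constant, so at most one term is $0$ and the sum is at least $n-1\ge 1/s$; the inequality follows. When $a,b\in\{x_1,\ldots,x_n\}$ (numerator $C_{n,s}$) I must show that the sum is at least $(n-2)C_{n,s}+2=C_{n,s}/s$. A term falls below $C_{n,s}$ only when the replacement deletes the last occurrence of $a$ or of $b$, so: if $\min\{n_a,n_b\}\ge2$ all $n$ terms equal $C_{n,s}$; if exactly one of $n_a,n_b$ equals $1$ then at least $n-1$ terms equal $C_{n,s}$, and $(n-1)C_{n,s}\ge(n-2)C_{n,s}+2$ precisely because $C_{n,s}\ge2$; and if $n_a=n_b=1$ then $n-2$ terms equal $C_{n,s}$ while the two remaining terms, corresponding to the unique $a$- and $b$-positions, are each equal to $1$ (they cannot be $0$, since for $n\ge3$ the $n-2$ other coordinates, whose values differ from $a$ and $b$, keep the reduced tuples non-constant). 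In every case the sum is at least $(n-2)C_{n,s}+2$, the inequality $C_{n,s}\ge2$ being exactly what the hypothesis $s\ge(n-1)^{-1}$ supplies.

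To see that $K^*_n=s$ is attained, I would pick $c\in X\setminus\{a,b\}$ (which exists since $|X|\ge4$) and consider $(x_1,\ldots,x_n;z)=(a,b,c,\ldots,c;c)$. Here the numerator is $C_{n,s}$, the two replacements at the $a$- and $b$-positions give non-constant tuples missing $a$ resp.\ $b$ (value $1$ each), and each of the $n-2$ replacements at a $c$-position leaves $a,b$ present (value $C_{n,s}$); thus the sum equals $(n-2)C_{n,s}+2=C_{n,s}/s$ and the ratio is exactly $s$. So $K^*_n=s$ and it is attained at this tuple. Crucially, for this tuple one has $d_s(x_1,\ldots,x_n)=d_s(x_1,\ldots,x_n)_i^z=C_{n,s}$ for every $c$-position $i\in\{3,\ldots,n\}$, so condition~\eqref{eq:partkkeq2} holds for all $k\in\{2,\ldots,n\}$.

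Finally I would deduce the best $k$-constants from Proposition~\ref{prop:asww7df}. Since $1/K^*_n=1/s=n-2+2/C_{n,s}$, we get $n-1/K^*_n=2-2/C_{n,s}\in[1,2)$ (again using $C_{n,s}\ge2$), so $n-1/K^*_n<k$ for every $k\in\{2,\ldots,n\}$; hence $K^*_{n,k}$ exists and $K^*_{n,k}\le\frac{1}{1/K^*_n-n+k}$ by the first part of Proposition~\ref{prop:asww7df}. For the reverse inequality I would apply the equality clause of that proposition to the tuple above: it attains $K^*_n$ and satisfies~\eqref{eq:partkkeq2} for every $k$, so~\eqref{eq:partkkeq} holds, exhibiting a tuple whose $k$-ratio equals $\frac{1}{1/K^*_n-n+k}$. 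Therefore $K^*_{n,k}=\frac{1}{1/K^*_n-n+k}$ for all $k\in\{2,\ldots,n\}$, as claimed.
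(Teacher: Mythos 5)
Your proof is correct and follows essentially the same route as the paper's: a case analysis on the multiplicities $n_a,n_b$ of $a$ and $b$ (you bound the sums from below where the paper computes the ratios $R_s$ exactly), attainment at tuples of the form $(a,b,c,\ldots,c;c)$, and an appeal to Proposition~\ref{prop:asww7df} for the best $k$-constants (the paper gets these by establishing the $k=2$ equality and propagating via Corollary~\ref{cor:asww7dfc}, whereas you verify condition~\eqref{eq:partkkeq2} for every $k$ directly --- an immaterial difference). The only slip is cosmetic: when $n_a=n_b=1$, the two terms at the $a$- and $b$-positions equal $1$ only if $z\notin\{a,b\}$ (one of them equals $C_{n,s}$ when $z\in\{a,b\}$), but since you use them only as lower bounds $\geq 1$, the argument is unaffected.
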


\begin{remark}
We observe that Proposition~\ref{prop:Ksnsab} can be easily generalized by considering the assumption that $s\in\left[(n-\ell)^{-1},(n-\ell -1)^{-1}\right[$ for some integer $\ell$ satisfying $1\leq\ell\leq n-2$. However, the value of $C_{n,s}$ and the admissible range of $k$ are to be adapted accordingly.
\end{remark}

\section{Additional properties for $n$-distances}

We now introduce and investigate subclasses of $n$-distances defined by considering some special properties. Throughout this section, for any $k\in\{1,\ldots,n\}$ and any $x\in X$, the notation $k\Cdot x$ stands for the $k$-tuple $x,\ldots,x$. For instance, we have
$$
d(3\Cdot x,2\Cdot y) ~=~ d(x,x,x,y,y).
$$

\begin{definition}\label{de:ssi7}
Let $k\in\{2,\ldots,n\}$. We say that an $n$-distance $d$ on $X$ fulfills the \emph{strong $k$-simplex inequality} if there exists $M_{n,k}>0$ such that, for any $n_1,\ldots,n_k\in\{1,\ldots,n\}$ with $n_1+\cdots +n_k=n$, the map $d'\colon X^k\to\R_+$ defined by
\begin{equation}\label{eq:dprime}
d'(x_1,\ldots,x_k) ~=~ d(n_1\!\Cdot x_1,\ldots,n_k\!\Cdot x_k),\qquad x_1,\ldots,x_k\in X,
\end{equation}
satisfies
$$
d'(x_1,\ldots,x_k) ~\leq ~M_{n,k}{\,}\sum_{i=1}^k d'(x_1,\ldots,x_k)_i^z{\,},\qquad x_1,\ldots,x_k,z\in X.
$$
\end{definition}

For instance, a $4$-distance $d$ on $X$ satisfies the strong $2$-simplex inequality if and only if there exists $M>0$ such that
\begin{eqnarray*}
d(2\Cdot x_1,2\Cdot x_2) &\leq & {\!}M{\,}(d(2\Cdot z,2\Cdot x_2)+d(2\Cdot x_1,2\Cdot z)),\\
d(3\Cdot x_1,x_2) &\leq & {\!}M{\,}(d(3\Cdot z,x_2)+d(3\Cdot x_1,z)),
\end{eqnarray*}
for all $x_1,x_2,z\in X$.

\begin{remark}
It should be noted that the map $d'\colon X^k\to\R_+$ defined in \eqref{eq:dprime} need not be symmetric. In particular, $d'$ need not be a $k$-distance.
\end{remark}

Many $n$-distances discussed in this paper satisfy the strong $k$-simplex inequality for $k=2,\ldots,n$. This is the case for instance for the radius of the smallest enclosing circle in $\R^2$, where the map $d'\colon X^k\to\R_+$ defined in \eqref{eq:dprime} is symmetric. The following example shows that the arithmetic mean based $n$-distance also satisfies the strong $k$-simplex inequality for $k=2,\ldots,n$. However, in this latter case the map $d'$ is clearly not symmetric.

\begin{example}\label{ex:ambRED4}
For any $k\in\{2,\ldots,n\}$, the arithmetic mean based $n$-distance satisfies the strong $k$-simplex inequality with constant $M_{n,k}=(k-1)^{-1}$. Indeed, let $n_1,\ldots,n_k\in\{1,\ldots,n\}$ be such that $n_1+\cdots +n_k=n$ and let $d'\colon X^k\to\R_+$ be the map defined in \eqref{eq:dprime}. Let $x_1,\ldots,x_k\in X$. We can assume without loss of generality that $x_1\leq\cdots\leq x_k$. Then the inequality
$$
d'(x_1,\ldots,x_k)~\leq ~\frac{1}{k-1}\sum_{i=1}^kd'(x_1,\ldots,x_k)_i^z,
$$
reduces to
$$
\frac{1}{n}\sum_{i=1}^kn_ix_i-x_1 ~\leq ~ \frac{1}{n}\sum_{i=1}^kn_ix_i-\min\{x_1,z\}+\frac{1}{k-1}{\,}(z-\min\{x_2,z\}){\,},
$$
or equivalently,
$$
(k-1)(x_1-\min\{x_1,z\})+(z-\min\{x_2,z\})~\geq ~0,
$$
which clearly holds.
\end{example}

\begin{definition}
We say that an $n$-distance $d$ on $X$ is \emph{repetition invariant} if for any
$x_1$, $\ldots${\,}, $x_n$, $x'_1$, $\ldots${\,}, $x'_n\in X$, we have
$$
\{x_1,\ldots,x_n\}~=~\{x'_1,\ldots,x'_n\}\quad\Rightarrow\quad d(x_1,\ldots,x_n)~=~d(x'_1,\ldots,x'_n).
$$
\end{definition}

Many $n$-distances discussed in this paper are repetition invariant. For instance, the cardinality based $n$-distance and the length of a largest inner interval (see Proposition~\ref{prop:Llii}) are repetition invariant. The following example provides two instances of $n$-distances that are not repetition invariant.

\begin{example}\label{ex:ambFeIR5}
The arithmetic mean based $n$-distance $d$ on $\R$ is not repetition invariant. Indeed, for any $x,y\in\R$ such that $x< y$, we have $d(x,y,y)=\frac{2}{3}(y-x)>\frac{1}{3}(y-x)=d(x,x,y)$. Also, in general, the Fermat $n$-distance $d$ on $X$ defined in terms of a distance $d_2$ on $X$ is not repetition invariant. Indeed, consider the distance $d_2(x,y)=|x-y|$ on $X=\R$. Denoting the Fermat point of $(0,0,1,1)$ by $x\in [0,1]$, we have $d(0,0,1,1)=2x+2(1-x)=2$. Denoting the Fermat point of $(0,1,1,1)$ by $x\in [0,1]$, we have $d(0,1,1,1)=3-2x$, which is minimized by $x=1$, with value $1$.
\end{example}

\begin{fact}\label{fact:1}
If an $n$-distance $d$ on $X$ is repetition invariant and satisfies the strong $k$-simplex inequality for some $k\in\{2,\ldots,n\}$, then the map $d'\colon X^k\to\R_+$ defined in \eqref{eq:dprime} is symmetric, and hence it is a $k$-distance whenever $M_{n,k}\leq 1$. As an important special case when $k=2$, the set $X$ is metrizable by $d'$ whenever $M_{n,2}\leq 1$.
\end{fact}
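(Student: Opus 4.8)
The plan is to verify the three defining conditions of a $k$-distance for the map $d'$, treating its symmetry as the crux and deriving the rest routinely. Throughout, I fix a tuple $(n_1,\ldots,n_k)$ with $n_1+\cdots+n_k=n$ and let $d'$ denote the associated map from \eqref{eq:dprime}; the strong $k$-simplex inequality provides a constant $M_{n,k}>0$ for which the stated inequality holds.

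First I would establish condition (ii), the symmetry of $d'$, which is where repetition invariance does all the work. For any permutation $\sigma$ of $\{1,\ldots,k\}$, the two tuples $(n_1\Cdot x_1,\ldots,n_k\Cdot x_k)$ and $(n_1\Cdot x_{\sigma(1)},\ldots,n_k\Cdot x_{\sigma(k)})$ have the same underlying set of arguments, namely $\{x_1,\ldots,x_k\}$, since permuting the blocks and reassigning the multiplicities changes neither which elements occur nor the fact that all of them occur. Repetition invariance of $d$ then forces $d$ to take the same value on both tuples, which is precisely the identity $d'(x_1,\ldots,x_k)=d'(x_{\sigma(1)},\ldots,x_{\sigma(k)})$. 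In fact this argument shows the stronger statement that $d'$ does not depend on the chosen multiplicities $n_1,\ldots,n_k$ at all, but symmetry is all that is needed here.

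Next I would check condition (i). Since each $n_i\geq 1$, every $x_i$ occurs among the $n$ arguments of $d(n_1\Cdot x_1,\ldots,n_k\Cdot x_k)$; hence these arguments are all equal if and only if $x_1=\cdots=x_k$. Applying condition (i) for the $n$-distance $d$ then gives $d'(x_1,\ldots,x_k)=0$ if and only if $x_1=\cdots=x_k$. Finally, for condition (iii), I would invoke the strong $k$-simplex inequality: since every summand $d'(x_1,\ldots,x_k)_i^z$ is nonnegative and $M_{n,k}\leq 1$, the bound $d'(x_1,\ldots,x_k)\leq M_{n,k}\sum_{i=1}^k d'(x_1,\ldots,x_k)_i^z$ immediately yields $d'(x_1,\ldots,x_k)\leq\sum_{i=1}^k d'(x_1,\ldots,x_k)_i^z$. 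Thus $d'$ satisfies (i), (ii), (iii) and is a $k$-distance. Specializing to $k=2$, a $2$-distance is by definition a distance on $X$, so $d'$ metrizes $X$.

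There is essentially no hard step here; the only point requiring care is the symmetry argument, where one must articulate precisely that repetition invariance collapses the dependence of $d$ on both the ordering and the multiplicities of its arguments. Everything else is a direct transcription of the corresponding properties of $d$, with condition (iii) using only nonnegativity of the summands together with $M_{n,k}\leq 1$.
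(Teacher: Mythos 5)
Your proof is correct: the paper states this as a Fact without any proof in the Appendix, and your verification (repetition invariance gives symmetry since the two block-tuples have the same underlying set; condition (i) follows because each $n_i\geq 1$; condition (iii) follows from the strong $k$-simplex inequality together with $M_{n,k}\leq 1$ and nonnegativity) is exactly the routine argument the authors intended. Nothing is missing.
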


In the next proposition, we show that any standard $n$-distance that is repetition invariant satisfies the strong $k$-simplex inequality for any $k\in\{2,\ldots,n\}$. We also provide the optimal value of the corresponding constant $M_{n,k}$. Of course the case $k=n$ is trivial and we clearly have $M_{n,n}=(n-1)^{-1}$. We first present a technical lemma.

\begin{lemma}\label{lemma:f456dfgg}
Let $d$ be a standard $n$-distance on $X$ that is repetition invariant, let $k\in\{2,\ldots,n-1\}$, and let $p\in\{0,\ldots,n-k\}$. Then, for any $x_1,\ldots,x_k,z\in X$, we have
\begin{multline*}
d(x_1,\ldots,x_{k-1},x_k,\ldots,x_k)\\
\leq ~ \frac{k+p}{(k-1)(k+p-1)}{\,}\sum_{i=1}^{k-1}d(x_1,\ldots,x_{k-1},x_k,\ldots,x_k)_i^z \\
\null + \frac{p+1}{(k-1)(k+p-1)}{\,}d(x_1,\ldots,x_{k-1},z,\ldots,z).
\end{multline*}
Moreover, the constants are optimal.
\end{lemma}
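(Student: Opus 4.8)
The plan is to lean on exactly two features of $d$. Since $d$ is standard, Proposition~\ref{prop:nfknskk} furnishes the partial simplex inequality at \emph{every} level $k'\in\{2,\ldots,n\}$ with the optimal constant $(k'-1)^{-1}$. Since $d$ is repetition invariant, the value $d(y_1,\ldots,y_n)$ depends only on the set $\{y_1,\ldots,y_n\}$, so any two tuples with the same set of entries receive the same value. Write $m=n-k+1\geq 2$, so the left-hand side is $d(x_1,\ldots,x_{k-1},m\Cdot x_k)$; abbreviate the displayed sum by $S=\sum_{i=1}^{k-1}d(x_1,\ldots,x_{k-1},m\Cdot x_k)_i^z$ and the last term by $A=d(x_1,\ldots,x_{k-1},m\Cdot z)$; and let $B$ denote the common value of $d$ on every tuple whose set of entries is $\{x_1,\ldots,x_k,z\}$.

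First I would apply the level-$(k+p)$ partial simplex inequality (legitimate since $2\leq k+p\leq n$) to the tuple $(x_1,\ldots,x_{k-1},m\Cdot x_k)$, summing over the $k-1$ positions carrying $x_1,\ldots,x_{k-1}$ together with $p+1$ of the $m$ positions carrying $x_k$. The first $k-1$ terms reproduce $S$; each of the other $p+1$ terms replaces one $x_k$ by $z$ while leaving $m-1\geq 1$ copies of $x_k$, so by repetition invariance each equals $B$. This gives
\begin{equation*}
d(x_1,\ldots,x_{k-1},m\Cdot x_k)~\leq~\frac{1}{k+p-1}\bigl(S+(p+1)B\bigr).
\end{equation*}
Next I would bound $B$ by applying the level-$k$ partial simplex inequality to the tuple $(x_1,\ldots,x_{k-1},x_k,(m-1)\Cdot z)$, whose entry set is $\{x_1,\ldots,x_k,z\}$ and whose value is therefore $B$; summing over the $k-1$ positions carrying $x_1,\ldots,x_{k-1}$ and the single position carrying $x_k$ and replacing by $z$, the term from $x_k\mapsto z$ has entry set $\{x_1,\ldots,x_{k-1},z\}$, hence equals $A$, while the term from $x_i\mapsto z$ has entry set $\{x_1,\ldots,\widehat{x_i},\ldots,x_{k-1},x_k,z\}$, which is precisely the $i$-th summand of $S$. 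Thus $B\leq\frac{1}{k-1}(S+A)$. Substituting and using $1+\frac{p+1}{k-1}=\frac{k+p}{k-1}$ reproduces exactly the asserted constants. The only things to verify are the arithmetic and the bookkeeping that repetition invariance really collapses the two families of terms to $(p+1)B$ and to $S$; both are routine.

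For optimality I would exhibit a standard repetition-invariant $n$-distance together with a tuple attaining equality with $S>0$ and $A>0$, so that neither coefficient can be lowered. The natural candidate is the diameter $d(y_1,\ldots,y_n)=\max_{i,j}|y_i-y_j|$ on $\R$: choosing $x_1\neq x_k$, $x_2=\cdots=x_{k-1}=x_k$ and $z=x_k$ gives $A=|x_1-x_k|$ and $S=(k-2)\,|x_1-x_k|$, and a direct computation shows the right-hand side evaluates to $|x_1-x_k|$, so equality holds. For $k\geq 3$ both $S$ and $A$ are positive, which forces both constants to be optimal.

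I expect the genuine obstacle to be the case $k=2$. There $S$ is a single $2$-section value $d(z,m\Cdot x_k)$, which the triangle-respecting example above makes vanish; indeed, using the equality characterization in Proposition~\ref{prop:asww7df} one sees that pinning the $S$-coefficient then requires a standard repetition-invariant $n$-distance whose $2$-sections \emph{violate} the triangle inequality. Constructing such a distance attaining equality (at least for $p=n-k$, where the constant reduces to $\frac{n}{n-1}$) is the delicate step; I would handle it by an explicit small example on a three-point set, checking by hand that every simplex ratio stays $\leq (n-1)^{-1}$ while the relevant $2$-section values realize the extremal relation.
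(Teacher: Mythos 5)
Your proof of the displayed inequality is correct and is essentially the paper's own proof: the paper likewise invokes Proposition~\ref{prop:nfknskk} (standardness gives $K^*_{n,k'}=(k'-1)^{-1}$ at every level $k'$), first at level $k+p$ applied to $(x_1,\ldots,x_{k-1},x_k,\ldots,x_k)$ — its inequality \eqref{mult:8a7d1}, where your $(p+1)B$ appears as $(p+1)\,d(x_1,\ldots,x_{k-1},x_k,\ldots,x_k,z)$ — and then, after using repetition invariance to rewrite that value as $d(x_1,\ldots,x_{k-1},x_k,z,\ldots,z)$, at level $k$ — its inequality \eqref{mult:8a7d2}, where repetition invariance also collapses the cross terms back to the summands of your $S$ — finishing with the same substitution and arithmetic.

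On optimality you differ from the paper only in the choice of example, not in substance: the paper takes the cardinality based $n$-distance with $x_1,\ldots,x_k$ distinct and $z=x_k$, giving equality with $S=(k-1)(k-2)$ and $A=k-1$; your diameter example is the same computation up to a scale factor. Consequently, the $k=2$ degeneracy you flag ($S=0$) afflicts the paper's example verbatim, so you are, if anything, more careful than the published proof. Whether this is a genuine gap depends on what ``optimal'' means. In the paper's standing usage (cf.\ Proposition~\ref{prop:symm5}, where optimal means equality is attained by at least one instance), the equality example already closes every case $k\in\{2,\ldots,n-1\}$, and your final ``delicate step'' is unnecessary. Under your stronger per-coefficient reading, your instinct is correct but can only be completed for $p=n-2$: when $k=2$ the $A$-coefficient equals $1$ for \emph{every} $p$ while the $S$-coefficient $\frac{p+2}{p+1}$ strictly decreases in $p$, so the case $p=n-2$ of the lemma itself shows that for $p<n-2$ the $S$-coefficient can be lowered to $\frac{n}{n-1}$, i.e.\ it is provably not optimal in your sense. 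For $p=n-2$ the three-point construction you anticipate does exist: on $X=\{u,v,w\}$ take the repetition-invariant $d$ whose value depends only on the entry set, equal to $1$ on $\{u,w\}$ and on $\{v,w\}$, to $2$ on $\{u,v,w\}$, and to $1+\frac{n}{n-1}$ on $\{u,v\}$; a case check shows this is a standard $n$-distance, and $(x_1,x_2;z)=(u,v;w)$ gives equality with $S=A=1>0$. One terminological caution: what must violate the triangle inequality in such an example is the diagonal map $d'$ of \eqref{eq:dprime} (your $\delta$), not a $2$-section of $d$ in the paper's sense — the genuine $2$-sections of a standard $n$-distance always satisfy the triangle inequality, since $K^*_{n,2}=1$.
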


\begin{proposition}\label{prop:azzs54sfd}
If a standard $n$-distance $d$ on $X$ is repetition invariant, then, for any $k\in\{2,\ldots,n-1\}$, it satisfies the strong $k$-simplex inequality with constant
$$
M_{n,k} ~=~ (k-1)^{-1}+(k(k-1)(n-1))^{-1}
$$
and this constant is optimal and attained.
\end{proposition}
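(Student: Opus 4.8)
The plan is to deduce the inequality from Lemma~\ref{lemma:f456dfgg} by a symmetrization over the $k$ variables, and then to establish optimality and attainment by exhibiting a single highly symmetric extremal $n$-distance.

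First I would record the structural simplification coming from repetition invariance. For a repetition-invariant $d$, the value $d(n_1\Cdot x_1,\ldots,n_k\Cdot x_k)$ depends only on the set $\{x_1,\ldots,x_k\}$, so the auxiliary map $d'$ in \eqref{eq:dprime} is independent of the chosen composition $(n_1,\ldots,n_k)$ and is symmetric in its arguments (this is the content of Fact~\ref{fact:1}). Consequently the strong $k$-simplex inequality for every composition reduces to a \emph{single} inequality for this well-defined symmetric map $d'$, and Lemma~\ref{lemma:f456dfgg}, although stated for the composition $(1,\ldots,1,n-k+1)$, may be read as an inequality for $d'$ in which one distinguished argument is singled out; by symmetry of $d'$ the same inequality holds with any of the $k$ arguments in the distinguished role.

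Next I would carry out the symmetrization. Writing $D=d'(x_1,\ldots,x_k)$ and $a_i=d'(x_1,\ldots,x_k)_i^z$, I apply Lemma~\ref{lemma:f456dfgg} with the maximal value $p=n-k$ and with each index $j\in\{1,\ldots,k\}$ distinguished, obtaining
$$
D~\le~\frac{n}{(k-1)(n-1)}\sum_{i\neq j}a_i+\frac{n-k+1}{(k-1)(n-1)}{\,}a_j,\qquad j=1,\ldots,k.
$$
Summing these $k$ inequalities and dividing by $k$ makes the coefficient of each $a_i$ equal to $\frac{1}{k}\big((k-1)\frac{n}{(k-1)(n-1)}+\frac{n-k+1}{(k-1)(n-1)}\big)$, which simplifies to $\frac{k(n-1)+1}{k(k-1)(n-1)}=M_{n,k}$, yielding $D\le M_{n,k}\sum_{i=1}^k a_i$. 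It is worth noting that neither coefficient furnished by the lemma equals $M_{n,k}$ (one is larger and one is smaller), so the symmetrization step is genuinely needed.

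Finally, for optimality and attainment I would track the equality case. Equality in the summed bound forces each of the $k$ instances above to be an equality, and subtracting two of them (the $a_j$-coefficients differ by $\frac{1}{n-1}$) shows that all the quantities $a_i$ must coincide. I would therefore construct an extremal $n$-distance with enough symmetry to realize this. Take $X=\{z,x_1,\ldots,x_k\}$ and define a repetition-invariant $d$ whose value on an $n$-tuple depends only on the pair $(\,|S\cap\{x_1,\ldots,x_k\}|,\,[z\in S]\,)$, where $S$ is the support; fixing $z$ and permuting $x_1,\ldots,x_k$ then leaves $d$ invariant. Choosing the two relevant values so that $d'(x_1,\ldots,x_k)=\frac{k(n-1)+1}{(k-1)(n-1)}{\,}d'(x_1,\ldots,x_{k-1},z)$, the $S_k$-symmetry makes all $a_i$ equal and makes every one of the $k$ rotated lemma-instances tight at $(x_1,\ldots,x_k;z)$, so $M_{n,k}$ is attained, hence optimal over the class. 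The main obstacle is precisely the verification that this bespoke $d$ is a genuine \emph{standard} $n$-distance: one must fix the remaining values and check that the simplex inequality holds with constant exactly $(n-1)^{-1}$ for all multiplicity patterns and all substituted elements, and that it is tight, so that $K_n^*=(n-1)^{-1}$. For $k=2$ this reduces to the three-point distance on $\{0,1,2\}$ with pair-values $t,t$ through $z=0$, opposite pair-value $(2+\frac{1}{n-1})t$, and triple-value $2t$, which one checks directly; the general case requires the analogous but more involved finite verification.
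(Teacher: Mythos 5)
Your proof of the inequality itself is correct and coincides with the paper's argument: invoke Lemma~\ref{lemma:f456dfgg} with $p=n-k$, use repetition invariance to make $d'$ well defined and symmetric (so a single composition suffices and any of the $k$ arguments can play the distinguished role), sum the $k$ rotated instances, and simplify the averaged coefficient to $M_{n,k}=\frac{k(n-1)+1}{k(k-1)(n-1)}$. Your equality-case observation (all the $a_i$ must coincide) is also sound.

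The genuine gap is in the optimality/attainment half, which is where essentially all of the paper's work lies. You correctly identify the target — a repetition-invariant \emph{standard} $n$-distance on a $(k+1)$-point set $\{x_1,\ldots,x_k,z\}$ realizing the ratio $M_{n,k}$ — but for general $k$ you never produce it: the values on the intermediate supports are left unspecified, and the verification that the resulting map is standard (i.e., that the full simplex inequality holds with constant exactly $(n-1)^{-1}$ for \emph{all} multiplicity patterns and all substituted elements) is deferred as a ``finite verification.'' That verification is not a formality, and careless instantiations of your scheme fail. Concretely, suppose for $k\geq 3$ you assign one common value (normalized to $1$) to every support avoiding $z$ with at least two elements — the most symmetric instantiation of your recipe — and value $a$ to supports containing $z$ other than $X$. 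Your normalization $\frac{1}{ka}=M_{n,k}$ forces $a=\frac{(k-1)(n-1)}{k(n-1)+1}$; but the configuration $(x_1,x_2,\ldots,x_2)$ with substitution element $z$ has left-hand side $1$ and right-hand side $\frac{1}{n-1}\bigl(a+(n-1)a\bigr)=\frac{na}{n-1}$, so standardness requires $a\geq\frac{n-1}{n}$, which contradicts $a=\frac{(k-1)(n-1)}{k(n-1)+1}<\frac{n-1}{n}$ (valid since $k\leq n-1$). So that example is not standard and cannot witness optimality. The paper's construction avoids exactly this trap by grading the values: it uses $\frac{1}{k-1}d_c$ (the scaled cardinality-based $n$-distance) on supports avoiding the special point, $a=\frac{(k-1)(n-1)}{k(n-1)+1}$ on supports containing it other than $X$, and $b=\frac{k}{k-1}a$ on $X$, and then establishes standardness through a substantial case analysis (relying, e.g., on the identity $(k-1)a+(n-k+1)b=n-1$). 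Your $k=2$ example coincides with this construction, but for $k\geq 3$ the construction and its verification — the core of the proposition — are missing from your proposal.
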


\begin{remark}
By Proposition~\ref{prop:azzs54sfd}, any standard $n$-distance $d$ on $X$ that is repetition invariant satisfies the strong $k$-simplex inequality for any $k\in\{2,\ldots,n\}$. Moreover, if $k\geq 3$, then we have $M_{n,k}\leq 1$, and hence the map $d'$ defined in \eqref{eq:dprime} is a $k$-distance by Fact~\ref{fact:1}. We might then say that $d$ is \emph{reducible} to a $k$-distance.
\end{remark}

We observe that by removing the standardness assumption in Lemma~\ref{lemma:f456dfgg} and Proposition~\ref{prop:azzs54sfd}, we can prove similarly the following more general result. However, the optimality of $M_{n,k}$ is no longer ensured.

\begin{proposition}
If an $n$-distance $d$ on $X$ is repetition invariant, then, for any integer $k$ satisfying $n-1/K^*_n<k<n$, it satisfies the strong $k$-simplex inequality with constant
$$
M_{n,k} ~=~ \frac{K^*_n+1}{1/K^*_n-n+k}-\frac{K^*_n}{k}{\,}.
$$
Also, we have $M_{n,k}\leq 1$ if $k\geq n+2-1/K^*_n$ (and $M_{n,k}>1$ if $k\leq n+1-1/K^*_n$).
\end{proposition}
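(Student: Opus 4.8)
The plan is to reproduce the argument behind Proposition~\ref{prop:azzs54sfd} (and its underlying Lemma~\ref{lemma:f456dfgg}), replacing the standard partial‑simplex coefficient $(k-1)^{-1}$ by the general coefficient $(1/K^*_n-n+k)^{-1}$ supplied by Proposition~\ref{prop:asww7df}; the hypothesis $n-1/K^*_n<k<n$ is precisely what guarantees both that this coefficient is positive and that the partial simplex inequality with $k$ summands is available. First I would reduce the statement to a three‑quantity inequality. Fix $n_1,\dots,n_k$ with $n_1+\cdots+n_k=n$ and let $d'$ be as in \eqref{eq:dprime}. Since $d$ is repetition invariant, $d'$ is symmetric and depends only on the set $\{x_1,\dots,x_k\}$, so writing $T_0=d'(x_1,\dots,x_k)$, $d'_i=d'(x_1,\dots,x_k)_i^z$ and $\Sigma=\sum_{i=1}^k d'_i$, the strong $k$-simplex inequality is exactly $T_0\le M_{n,k}\,\Sigma$. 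Repetition invariance also shows that for each ``orientation'' $j\in\{1,\dots,k\}$ (that is, placing the block on the $j$th argument), changing one copy of $x_j$ into $z$ yields a tuple with set $\{x_1,\dots,x_k,z\}$, hence a single common value $U$ that does not depend on $j$ (here $k<n$ ensures a copy of $x_j$ survives).

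The two key inequalities are then the following. \textbf{(A)} Applying the simplex inequality with its best constant $K^*_n$ (all $n$ summands, center $z$) in orientation $j$ gives $T_0\le K^*_n\bigl[(\Sigma-d'_j)+(n-k+1)\,U\bigr]$, the $k-1$ single positions producing the $d'_i$ with $i\neq j$ and the $n-k+1$ block positions each producing $U$. \textbf{(B)} By repetition invariance $U=d(x_1,\dots,x_k,z,\dots,z)$, and applying Proposition~\ref{prop:asww7df} to this representation with $k$ summands (positions $1,\dots,k$) and center $z$ gives $U\le(1/K^*_n-n+k)^{-1}\,\Sigma$, each section recovering one $d'_i$. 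Summing (A) over $j=1,\dots,k$ yields $k\,T_0\le K^*_n\bigl[(k-1)\Sigma+k(n-k+1)U\bigr]$, and substituting (B) gives
\[
T_0\ \le\ K^*_n\Bigl[\frac{k-1}{k}+\frac{n-k+1}{1/K^*_n-n+k}\Bigr]\Sigma .
\]
A direct computation, using the identity $K^*_n\bigl(1/K^*_n-(n-k)\bigr)=1-K^*_n(n-k)$, shows that the bracketed constant equals $\frac{K^*_n+1}{1/K^*_n-n+k}-\frac{K^*_n}{k}=M_{n,k}$, which is the desired strong $k$-simplex inequality.

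For the threshold I would solve the inequality $M_{n,k}\le 1$ directly. Writing $t=1/K^*_n$, it is equivalent to $k(1+t)\le(tk+1)(t-n+k)$; since $K^*_n\le 1$ gives $t\ge 1$, one checks that the right-hand side strictly exceeds the left when $t-n+k\ge 2$ and falls short when $t-n+k\le 1$. This gives $M_{n,k}\le 1$ for $k\ge n+2-1/K^*_n$ and $M_{n,k}>1$ for $k\le n+1-1/K^*_n$, as claimed.

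The main obstacle is the \emph{symmetrization} step: averaging over the $k$ orientations is exactly what lowers the constant from the crude coefficient $K^*_n\bigl[1+(n-k+1)(1/K^*_n-n+k)^{-1}\bigr]$ of $\Sigma$ (what one obtains from a single orientation) down to $M_{n,k}$; the delicate bookkeeping is verifying, under repetition invariance, that every block section equals the same intermediate value $U$ and that the $k$ sections in (B) reproduce the $d'_i$ even when the $x_i$ are not distinct. Unlike the standard case, I would make no attempt to prove optimality or attainedness of $M_{n,k}$: using the full simplex inequality in (A) (equivalently, fixing the count $p=n-k$ in the Lemma) gives a valid bound but abandons the extremal analysis that established sharpness in Proposition~\ref{prop:azzs54sfd}.
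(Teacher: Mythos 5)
Your proof is correct and follows essentially the same route the paper intends: since the paper gives no separate proof (it only remarks that the result follows by removing standardness from Lemma~\ref{lemma:f456dfgg} and Proposition~\ref{prop:azzs54sfd}), your steps (A)--(B) together with the symmetrization over the $k$ orientations are exactly that generalization, with Proposition~\ref{prop:asww7df} supplying the coefficient $(1/K^*_n-n+k)^{-1}$ in place of $(k-1)^{-1}$. Both the algebra identifying your bracketed constant with $M_{n,k}$ and the threshold analysis for $M_{n,k}\le 1$ (via $k(1+t)\le(tk+1)(t-n+k)$ with $t=1/K^*_n\ge 1$) check out.
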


In the following definition, we introduce a property for $n$-distances that is stronger than repetition invariance. We observe that this property was already considered in the special case of $n=3$ in the framework of $G$-metric spaces in \cite{Mustafa2006}.

\begin{definition}
We say that an $n$-distance $d$ on $X$ is \emph{nonincreasing under identification of variables} if for any $x_1,\ldots,x_n\in X$, we have
$$
d(x_1,\ldots,x_{n-1},x_n) ~\geq ~d(x_1,\ldots,x_{n-1},x_1).
$$
In other words, the distance cannot increase when two variables are identified.
\end{definition}

\begin{proposition}\label{prop:sfda4}
If an $n$-distance $d$ on $X$ is nonincreasing under identification of variables, then it is repetition invariant.
\end{proposition}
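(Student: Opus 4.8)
The plan is to exploit the permutation invariance (condition (ii)) to recast the hypothesis in a symmetric form, and then to show that $d$ depends only on the underlying set of its arguments by connecting any two configurations with the same underlying set through single-variable identifications, each of which turns out to be \emph{reversible} by another identification.

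First I would observe that, by condition (ii), the displayed inequality in the definition of nonincreasingness is equivalent to the following statement: for every $n$-tuple $(y_1,\ldots,y_n)$ and every pair of positions $p\neq q$, replacing $y_p$ by $y_q$ cannot increase $d$, i.e.
\[
d(y_1,\ldots,y_n) ~\geq~ d(y_1,\ldots,y_n)_p^{y_q}.
\]
Indeed, one permutes the arguments so that position $q$ becomes the first and position $p$ the last, applies the defining inequality, and permutes back. Since $d$ is symmetric, its value depends only on the multiset of its arguments; equivalently, writing $S=\{y_1,\ldots,y_n\}=\{a_1,\ldots,a_m\}$, it depends only on the vector of multiplicities $(c_1,\ldots,c_m)$ with $c_i\geq 1$ and $c_1+\cdots+c_m=n$. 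Repetition invariance is exactly the assertion that $d$ is independent of this multiplicity vector, for a fixed $S$.

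The key step is to compare two multiplicity vectors that differ by a single \emph{unit move}: transferring one unit from a coordinate $c_i\geq 2$ to a coordinate $c_j$, yielding $(\ldots,c_i-1,\ldots,c_j+1,\ldots)$ (the condition $c_i\geq 2$ guarantees that the underlying set $S$ is unchanged). Realizing the first vector by a tuple, the move amounts to replacing one occurrence of $a_i$ by $a_j$, so the symmetric form above gives $d(\ldots,c_i,\ldots,c_j,\ldots)\geq d(\ldots,c_i-1,\ldots,c_j+1,\ldots)$. Crucially, the \emph{reverse} move---replacing one occurrence of $a_j$ by $a_i$ in the new tuple---is again an identification of two variables, and it is legitimate precisely because $a_i$ still occurs (its multiplicity $c_i-1$ is still $\geq 1$). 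Applying the symmetric form once more yields the opposite inequality, and hence equality. Thus every unit move between positive-multiplicity vectors preserves the value of $d$.

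Finally I would invoke the elementary fact that any two vectors $(c_1,\ldots,c_m)$ and $(c'_1,\ldots,c'_m)$ with all entries $\geq 1$ and the same sum $n$ are joined by a finite sequence of such unit moves staying within positive-multiplicity vectors (repeatedly shift a unit out of a coordinate where $c_i>c'_i$, which therefore has multiplicity at least $2$, into one where $c_j<c'_j$). Chaining the equalities from the previous step shows that $d$ takes the same value on all tuples with underlying set $S$, which is repetition invariance. I expect the only delicate point to be the one highlighted above: one must restrict to moves out of coordinates with multiplicity at least $2$, so that both the forward and the backward identification are genuine (the source element is never eliminated), which is exactly what upgrades each single move from a mere inequality to an \emph{equality}.
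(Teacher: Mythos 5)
Your proof is correct and follows essentially the same route as the paper's: the paper also replaces one copy of a duplicated element by another element of the tuple, notes (implicitly) that this move is reversible by a second identification and hence preserves the value of $d$, and then iterates such moves to connect any two tuples with the same underlying set. Your write-up merely makes explicit the two points the paper leaves terse --- the equality coming from the forward and backward identifications, and the connectivity of multiplicity vectors under unit moves --- so no gap to report.
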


\begin{example}
The cardinality based $n$-distance on $X$ is nonincreasing under identification of variables. The length $d$ of a largest inner interval (see Proposition~\ref{prop:Llii}) is not nonincreasing under identification of variables. Indeed, we have $1=d(1,2,3)<d(1,3,3)=2$.
\end{example}

\begin{proposition}\label{prop:sfda45}
Let $d$ be an $n$-distance on $X$ that is nonincreasing under identification of variables. Let $k\in\{2,\ldots,n\}$ be such that $K^*_{n,k}$ exists (see Definition~\ref{de:bestkk5}). Then $d$ satisfies the strong $k$-simplex inequality with constant $M_{n,k}= K^*_{n,k}$.
\end{proposition}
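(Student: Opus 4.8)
The plan is to reduce the strong $k$-simplex inequality to the partial $k$-simplex inequality \eqref{eq:partkk}, which I first note holds with the \emph{infimum} constant itself: by Definition~\ref{de:bestkk5} the set of admissible constants is upward closed, so the inequality holds for every $K>K^*_{n,k}$, and letting $K$ decrease to $K^*_{n,k}$ (using that the sums are nonnegative) shows it holds with $K^*_{n,k}$. The case $k=n$ is trivial, since then every $n_i=1$, the map $d'$ coincides with $d$, and $K^*_{n,n}=K^*_n$; so I assume $k\le n-1$. I will use throughout that $d$ is repetition invariant (Proposition~\ref{prop:sfda4}), so that $d(x_1,\ldots,x_n)$ depends only on the set $\{x_1,\ldots,x_n\}$; in particular both $d'(x_1,\ldots,x_k)$ and $d'(x_1,\ldots,x_k)_i^z$ are independent of the block sizes $n_1,\ldots,n_k$, so it suffices to establish the inequality for the single function $d'$.

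Fix $x_1,\ldots,x_k,z\in X$ and consider the $n$-tuple $w=(x_1,\ldots,x_k,z,\ldots,z)$ with $n-k\ge 1$ trailing copies of $z$. The first step turns nonincreasingness into a monotonicity statement: replacing each trailing $z$ in $w$ by $x_1$ is an identification of variables, so (by nonincreasingness together with permutation invariance) $d$ does not increase; since the resulting tuple has the same value set $\{x_1,\ldots,x_k\}$ as $(n_1\Cdot x_1,\ldots,n_k\Cdot x_k)$, repetition invariance yields $d'(x_1,\ldots,x_k)\le d(w)$. The second step applies the partial $k$-simplex inequality to $w$, replacing its first $k$ coordinates (the values $x_1,\ldots,x_k$) by $z$, to get $d(w)\le K^*_{n,k}\sum_{i=1}^k d(w)_i^z$. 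The crucial bookkeeping is that, for each $i$, the tuple $d(w)_i^z$ has value set $\{x_1,\ldots,x_{i-1},x_{i+1},\ldots,x_k,z\}$, which is exactly the value set of the full block replacement $d'(x_1,\ldots,x_k)_i^z$; hence by repetition invariance $d(w)_i^z=d'(x_1,\ldots,x_k)_i^z$. Chaining the two displays gives the strong $k$-simplex inequality with $M_{n,k}=K^*_{n,k}$.

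The main obstacle, and the heart of the argument, is to see that one must \emph{not} apply the partial simplex inequality to a representative of $d'$ directly: when a block has size $\ge 2$, a single-coordinate replacement leaves a residual copy of $x_i$, producing a term whose value set is the full $\{x_1,\ldots,x_k,z\}$, and nonincreasingness makes this term \emph{larger} than $d'_i^z$ rather than smaller, so the naive approach gives a strictly weaker bound. Pre-loading $w$ with $n-k$ copies of $z$ is precisely the device that fixes this: those copies make each $x_i$ a singleton of $w$, so that deleting it collapses the value set correctly, while the monotonicity step of paragraph two is exactly what ``pays'' for inserting the extra $z$'s. The only remaining points are routine: the degenerate case $x_1=\cdots=x_k=z$, where $d'(x_1,\ldots,x_k)=0$ and the inequality is trivial, and the verification that $w$ has at least two distinct values (so that the partial inequality applies) in all other cases.
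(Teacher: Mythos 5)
Your proof is correct and is essentially the paper's own argument: both bound $d'(x_1,\ldots,x_k)\le d(x_1,\ldots,x_k,z,\ldots,z)$ via nonincreasingness and repetition invariance, apply the partial $k$-simplex inequality to that padded tuple, and identify $d(x_1,\ldots,x_k,z,\ldots,z)_i^z$ with $d'(x_1,\ldots,x_k)_i^z$ by repetition invariance. Your added care (passing to the infimum constant $K^*_{n,k}$, the degenerate all-equal case, and the $|\{\cdot\}|\ge 2$ restriction) only makes explicit details the paper leaves implicit.
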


\begin{corollary}
Let $d$ be a standard $n$-distance on $X$ that is nonincreasing under identification of variables. Then, for any $k\in\{2,\ldots,n\}$, $d$ satisfies the strong $k$-simplex inequality with constant $M_{n,k}= (k-1)^{-1}$.
\end{corollary}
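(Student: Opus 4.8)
The plan is to derive this corollary directly from the two propositions that immediately precede it, since all the substantive work has already been carried out there. The hypotheses split cleanly along functional lines: standardness controls the \emph{value} of the best $k$-constant, while nonincreasingness under identification of variables is exactly the hypothesis that converts that best $k$-constant into a strong $k$-simplex constant. So the proof will be a short concatenation rather than a fresh argument.

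First I would invoke Proposition~\ref{prop:nfknskk}. Because $d$ is standard, that proposition guarantees that the best $k$-constant $K^*_{n,k}$ exists for every $k\in\{2,\ldots,n\}$ and, moreover, that $K^*_{n,k}=(k-1)^{-1}$ for each such $k$. This settles both the existence question and the exact numerical value at once, so no independent estimation of $K^*_{n,k}$ is needed at this stage.

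Next I would apply Proposition~\ref{prop:sfda45}. Its hypotheses are that $d$ is nonincreasing under identification of variables, which is assumed here, and that $K^*_{n,k}$ exists for the value of $k$ under consideration, which was just established for every $k\in\{2,\ldots,n\}$. The conclusion of that proposition is precisely that $d$ satisfies the strong $k$-simplex inequality with $M_{n,k}=K^*_{n,k}$; note in particular that this single constant is uniform over all compositions $n_1+\cdots+n_k=n$, so the uniformity required in Definition~\ref{de:ssi7} is already built in and needs no separate verification. Substituting $K^*_{n,k}=(k-1)^{-1}$ then yields $M_{n,k}=(k-1)^{-1}$ for each $k\in\{2,\ldots,n\}$, as claimed.

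There is essentially no obstacle: the only point to check is that the two sets of hypotheses mesh, i.e.\ that standardness furnishes exactly the existence hypothesis that Proposition~\ref{prop:sfda45} requires, which it does. As a consistency remark (not strictly needed for the proof), nonincreasingness implies repetition invariance by Proposition~\ref{prop:sfda4}, so the present bound should be compared with the weaker one $M_{n,k}=(k-1)^{-1}+(k(k-1)(n-1))^{-1}$ of Proposition~\ref{prop:azzs54sfd}; the stronger monotonicity hypothesis used here removes the second term, giving the optimal value $(k-1)^{-1}$.
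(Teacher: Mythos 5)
Your proof is correct and is exactly the derivation the paper intends: the corollary is stated without proof precisely because it follows immediately by combining Proposition~\ref{prop:nfknskk} (standardness gives existence and the value $K^*_{n,k}=(k-1)^{-1}$) with Proposition~\ref{prop:sfda45} (nonincreasingness converts $K^*_{n,k}$ into the strong $k$-simplex constant $M_{n,k}$).
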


\section{Multidistances}

Recall that a \emph{multidistance} on $X$, as defined by Mart\'{\i}n and Mayor \cite{MarMay11}, is a function $d\colon\bigcup_{n\geq 2}X^n\to\R_+$ satisfying the following three conditions, for every integer $n\geq 1$:
\begin{enumerate}
\item[(i)] $d(x_1,\ldots,x_n)=0$ if and only if $x_1=\cdots = x_n$,
\item[(ii)] $d|_{X^n}$ is invariant under any permutation of its arguments,
\item[(iii)] $d(x_1,\ldots,x_n)\leq\sum_{i=1}^nd(x_i,z)$ for all $x_1,\ldots,x_n,z\in X$.
\end{enumerate}

As already observed in \cite{KisMarTeh18}, some $n$-distances cannot be considered to define multidistances. For instance, the area of the smallest circle enclosing $n$ points in $\R^2$ cannot be used to define a multidistance (since the triangle inequality does not hold when $n=2$). Likewise, the number of lines determined by $n$ points of $\R^2$ cannot be used to define a multidistance. Indeed, if the points $x_1,\ldots,x_n$ are pairwise distinct and placed on a circle centered at $z$, then for any integer $n\geq 3$ we have
$$
d_n(x_1,\ldots,x_n) ~=~ {n\choose 2} ~ > ~ n ~=~ \sum_{i=1}^nd_2(x_i,z).
$$

We also observe that many $n$-distances can be considered to define multidistances, even in the nonstandard case. The largest inner interval as defined in Proposition~\ref{prop:Llii} could serve as a very simple example here.

In the following proposition, we show how multidistances can be easily defined from certain standard $n$-distances.

\begin{lemma}\label{lemma:sad7f}
Let $d$ be a standard $n$-distance on $X$, let $g\colon X^2\to\R_+$ be a function, and let $z\in X$. If $d(x,z,\ldots,z)\leq g(x,z)$ for all $x\in X$, then for any $k\in\{1,\ldots,n\}$ we have
$$
d(x_1,\ldots,x_k,z,\ldots,z) ~\leq ~ \sum_{i=1}^k g(x_i,z){\,},\qquad x_1,\ldots,x_k\in X.
$$
In particular,
$$
d(x_1,\ldots,x_n) ~\leq ~ \sum_{i=1}^n g(x_i,z){\,},\qquad x_1,\ldots,x_n\in X.
$$
\end{lemma}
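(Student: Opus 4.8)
The plan is to prove the displayed inequality by induction on $k$, using the fact that a standard $n$-distance satisfies the partial simplex inequality with the optimal $k$-constant $(k-1)^{-1}$ for every $k\in\{2,\ldots,n\}$ (Proposition~\ref{prop:nfknskk}). For the base case $k=1$, the claimed bound $d(x_1,z,\ldots,z)\leq g(x_1,z)$ is exactly the hypothesis. For the inductive step, I fix $k\in\{2,\ldots,n\}$ and assume the inequality holds for $k-1$, that is, $d(y_1,\ldots,y_{k-1},z,\ldots,z)\leq\sum_{i=1}^{k-1}g(y_i,z)$ for all $y_1,\ldots,y_{k-1}\in X$ (with $n-k+1$ trailing copies of $z$).

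Next I would apply the partial simplex inequality of order $k$ to the tuple $(x_1,\ldots,x_k,z,\ldots,z)$ with substitution point $z$, substituting the first $k$ coordinates, to obtain
$$
d(x_1,\ldots,x_k,z,\ldots,z)~\leq~\frac{1}{k-1}\sum_{i=1}^k d(x_1,\ldots,x_k,z,\ldots,z)_i^z.
$$
The key observation is that, by permutation invariance (condition (ii)), setting the $i$th coordinate equal to $z$ turns the $i$th summand into $d(x_1,\ldots,\widehat{x_i},\ldots,x_k,z,\ldots,z)$, which has $k-1$ genuine arguments and $n-k+1$ copies of $z$ and is therefore of the form covered by the inductive hypothesis. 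Hence each summand is bounded by $\sum_{j\neq i}g(x_j,z)$, and summing over $i$ gives
$$
\sum_{i=1}^k\sum_{j\neq i}g(x_j,z)~=~(k-1)\sum_{j=1}^k g(x_j,z),
$$
since each term $g(x_j,z)$ is counted exactly $k-1$ times. Dividing by $k-1$ yields the desired bound for $k$, and taking $k=n$ gives the ``in particular'' statement.

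I do not expect a serious obstacle here: the only points requiring care are to confirm that each substituted summand genuinely matches the inductive-hypothesis form after reindexing by symmetry, and that the partial simplex inequality with constant $(k-1)^{-1}$ is available for each $k\in\{2,\ldots,n\}$, which is guaranteed precisely because $d$ is assumed standard. The combinatorial bookkeeping in the double sum is routine and is the mechanism that makes the constant collapse exactly to $1$.
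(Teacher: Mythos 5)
Your proof is correct and is essentially the paper's own argument: both proceed by induction on $k$, invoke Proposition~\ref{prop:nfknskk} (the partial simplex inequality with constant $(k-1)^{-1}$, available because $d$ is standard) on the tuple $(x_1,\ldots,x_k,z,\ldots,z)$, bound each substituted summand via the inductive hypothesis, and use the count that each $g(x_j,z)$ occurs $k-1$ times in the double sum so that the constant cancels exactly. The only difference is cosmetic indexing (you step from $k-1$ to $k$, the paper from $k$ to $k+1$).
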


\begin{proposition}\label{prop:mul55}
Let $(d_n)_{n\geq 2}$ be a sequence, where $d_n$ is a standard $n$-distance on $X$ satisfying
$$
d_n(x_n,z_n,\ldots,z_n)~\leq ~ d_2(x_n,z_n),\qquad n\geq 2{\,};~x_n,z_n\in X.
$$
Then the map $d\colon\bigcup_{n\geq 2}X^n\to\R_+$ defined by $d|_{X^n}=d_n$ is a multidistance on $X$.
\end{proposition}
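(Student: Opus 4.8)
The plan is to verify directly the three defining conditions of a multidistance for the map $d$ determined by $d|_{X^n}=d_n$. Conditions (i) and (ii) come for free: since each $d_n$ is by hypothesis an $n$-distance, it already satisfies $d_n(x_1,\ldots,x_n)=0$ if and only if $x_1=\cdots=x_n$, and it is invariant under permutations of its arguments. Restricted to a single block $X^n$, these are precisely conditions (i) and (ii) in the definition of a multidistance, for every $n\geq 2$. Hence the whole content of the proposition lies in the third condition, the multidistance inequality
$$
d(x_1,\ldots,x_n) ~\leq ~ \sum_{i=1}^n d(x_i,z),\qquad x_1,\ldots,x_n,z\in X,
$$
which, because $d|_{X^2}=d_2$, is nothing but $d_n(x_1,\ldots,x_n)\leq\sum_{i=1}^n d_2(x_i,z)$.

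To establish this, I would fix an integer $n\geq 2$ and an element $z\in X$, and apply Lemma~\ref{lemma:sad7f} to the standard $n$-distance $d_n$ with the choice $g=d_2$. The hypothesis required by that lemma, namely $d_n(x,z,\ldots,z)\leq g(x,z)$ for all $x\in X$, is exactly the standing assumption of the proposition read with $g(x,z)=d_2(x,z)$. The ``in particular'' part of the conclusion of Lemma~\ref{lemma:sad7f} then yields at once
$$
d_n(x_1,\ldots,x_n) ~\leq ~ \sum_{i=1}^n d_2(x_i,z),\qquad x_1,\ldots,x_n\in X.
$$
Since $n\geq 2$ and $z$ were arbitrary, condition (iii) holds on every block, and therefore $d$ is a multidistance.

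The hard part is not in this proposition at all: the real work is concentrated in Lemma~\ref{lemma:sad7f}, whose proof uses standardness through the optimal partial simplex inequalities $K^*_{n,k}=(k-1)^{-1}$ of Proposition~\ref{prop:nfknskk} to upgrade the single-argument domination $d_n(x,z,\ldots,z)\leq d_2(x,z)$ into the full $n$-term bound. Once that lemma is granted, the proposition follows by the mere substitution $g=d_2$, with no further estimation needed. The only point worth a second glance is the degenerate block $n=2$: there $d_2$ is itself a standard $2$-distance, since $K^*_2=1=(2-1)^{-1}$, the hypothesis holds trivially with equality, and the lemma reduces to the triangle inequality for $d_2$, which is already built into $d_2$ being a distance on $X$.
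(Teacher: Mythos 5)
Your proof is correct and follows exactly the paper's route: the paper proves this proposition simply by noting it is an immediate consequence of Lemma~\ref{lemma:sad7f}, which is precisely your application with $g=d_2$. Your spelled-out verification of conditions (i)--(ii) and the remark on the $n=2$ block are accurate elaborations of that same one-line argument.
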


\begin{proof}
This is an immediate consequence of Lemma~\ref{lemma:sad7f}.
\end{proof}

It is known \cite{AguMarMaySunVal12,MarMayVal11} that the radius of the smallest enclosing circle in $\R^2$ defines a multidistance. The next example shows how we can retrieve this result from Proposition~\ref{prop:mul55}.

\begin{example}
Consider the sequence $(d_n)_{n\geq 2}$, where the $n$-distance $d_n$ is defined by the radius of the smallest circle enclosing $n$ points in $\R^2$. We then have $d_n(x_n,z_n,\ldots,z_n)=d_2(x_n,z_n)$ for all $n\geq 2$ and all $x_n,z_n\in\R^2$. By Proposition~\ref{prop:mul55}, the map $d\colon\bigcup_{n\geq 2}(\R^2)^n\to\R_+$ defined by $d|_{X^n}=d_n$ for all $n\geq 2$ is a multidistance on $\R^2$.
\end{example}

The following example shows that the arithmetic mean based $n$-distance defines a multidistance, provided its binary version is doubled.

\begin{example}
Consider the sequence $(d_n)_{n\geq 2}$, where $d_n$ is the arithmetic mean based $n$-distance on $\R$. For any $n\geq 3$, we have $d_n(x_n,z_n,\ldots,z_n)\leq d_2(x_n,z_n)$ if and only if $z_n \leq x_n$. Now, replacing $d_2$ by the map $d'_2\colon\R^2\to\R_+$ defined by
$$
d'_2(x,z) ~=~ d_n(x,z,\ldots,z)+d_n(z,x,\ldots,x) ~=~ 2{\,}d_2(x,z),
$$
we obtain $d_n(x_n,z_n,\ldots,z_n)\leq d'_2(x_n,z_n)$ for all $n\geq 2$ and all $x_n,z_n\in\R$. By Proposition~\ref{prop:mul55}, the map $d\colon\bigcup_{n\geq 2}(\R^2)^n\to\R_+$ defined by $d|_{X^n}=d_n$ for all $n\geq 3$ and $d|_{X^2}=d'_2$ is a multidistance on $\R^2$.
\end{example}

In the following proposition, we show how $n$-distances can be defined from certain multidistances. The proof is straightforward and thus omitted.

\begin{proposition}
Let $d\colon\bigcup_{n\geq 2} X^n\to\R_+$ be a multidistance on $X$ and let $n\geq 3$ be an integer. If $d_n=d|_{X^n}$ is nonincreasing under identification of variables and satisfies $d(x,z)\leq d_n(x,z,\ldots,z)$ for all $x,z\in X$, then it is an $n$-distance.
\end{proposition}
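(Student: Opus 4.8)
The plan is to verify the three defining conditions of an $n$-distance for the restriction $d_n=d|_{X^n}$. Conditions (i) and (ii) require no work at all: they are literally the multidistance axioms (i) and (ii) specialized to the single arity $n$. All the content therefore lies in establishing the simplex inequality (iii), namely
\[
d_n(x_1,\ldots,x_n)~\le~\sum_{i=1}^n d_n(x_1,\ldots,x_n)_i^z,\qquad x_1,\ldots,x_n,z\in X.
\]
The multidistance axiom (iii) already gives the upper bound $d_n(x_1,\ldots,x_n)\le\sum_{j=1}^n d(x_j,z)$, so the whole problem reduces to dominating these binary pieces $d(x_j,z)$ by the $n$ summands on the right-hand side above.

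First I would record the elementary consequence of nonincreasing-under-identification together with permutation invariance: replacing any single argument of $d_n$ by a copy of another argument already present cannot increase the value (move the two relevant slots to the last and first positions via (ii), then apply the defining inequality of the property). Using this monotonicity under merging, fix $x_1,\ldots,x_n,z$ and examine the $i$th summand $d_n(x_1,\ldots,x_n)_i^z=d_n(x_1,\ldots,x_{i-1},z,x_{i+1},\ldots,x_n)$. Since the value $z$ occupies slot $i$, for any index $j\neq i$ I may replace every argument other than $x_j$ by a copy of that $z$; each such merge only decreases the value, so
\[
d_n(x_1,\ldots,x_n)_i^z~\ge~d_n(x_j,z,\ldots,z)~\ge~d(x_j,z),
\]
where the last step is the hypothesis $d(x,z)\le d_n(x,z,\ldots,z)$. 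Hence every summand dominates $d(x_j,z)$ simultaneously for all $j\neq i$.

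The combinatorial heart of the argument—and the only step requiring any thought—is to assemble these per-summand bounds so that each binary piece $d(x_j,z)$ is charged exactly once. Choosing the cyclic shift $\sigma$ with $\sigma(i)=i+1$ for $i<n$ and $\sigma(n)=1$, which is fixed-point-free for $n\ge2$, guarantees $\sigma(i)\neq i$ for every $i$, so that
\[
\sum_{i=1}^n d_n(x_1,\ldots,x_n)_i^z~\ge~\sum_{i=1}^n d(x_{\sigma(i)},z)~=~\sum_{j=1}^n d(x_j,z)~\ge~d_n(x_1,\ldots,x_n),
\]
the final inequality being the multidistance axiom (iii) with apex $z$. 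This is exactly the simplex inequality, completing the verification. The point to be careful about is precisely this matching: each summand forgets its own index (the value $x_i$ has been overwritten by $z$), so one \emph{cannot} bound the $i$th summand below by $d(x_i,z)$; the fixed-point-free pairing is what repairs this mismatch and lets the multidistance inequality close the loop. Everything else is routine, which is why the paper deems the proof straightforward.
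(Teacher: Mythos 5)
Your proof is correct and complete. The paper in fact omits its own proof (it is declared ``straightforward and thus omitted''), and your argument is precisely the natural one it must have in mind: axioms (i) and (ii) transfer verbatim, the merging consequence of nonincreasingness under identification combined with permutation invariance yields $d_n(x_1,\ldots,x_n)_i^z\geq d_n(x_j,z,\ldots,z)\geq d(x_j,z)$ for every $j\neq i$, and your fixed-point-free matching correctly handles the one genuine subtlety---that the $i$th summand cannot be charged with $d(x_i,z)$, since $x_i$ has been overwritten by $z$---before the multidistance inequality $d_n(x_1,\ldots,x_n)\leq\sum_{j=1}^n d(x_j,z)$ closes the argument.
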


\appendix
\section*{Appendix: proofs}

\begin{proof}[Proof of Proposition~\ref{prop:Llii}]
If $n=2$, then $d$ is the usual Euclidean distance on $\R$. Now suppose that $n\geq 3$ and let $x_1,\ldots,x_n,z\in\R$ be such that $|\{x_1,\ldots,x_n\}|\geq 2$. By symmetry, we can assume without loss of generality that $x_1\leq\cdots\leq x_n$. Let $p\in\{1,\ldots,n-1\}$ such that $d(x_1,\ldots,x_n)=x_{p+1}-x_p$. There are two exclusive cases to consider.
\begin{itemize}
\item Case $z\notin\left]x_p,x_{p+1}\right[$.
\begin{itemize}
\item If $1\neq p\neq n-1$, then $d(x_1,\ldots,x_n)_i^z\geq d(x_1,\ldots,x_n)$ for $i=1,\ldots,n$, that is,
$$
\sum_{i=1}^nd(x_1,\ldots,x_n)_i^z ~\geq ~ n{\,}d(x_1,\ldots,x_n).
$$
\item If $p=1$, then $d(x_1,\ldots,x_n)_i^z\geq d(x_1,\ldots,x_n)$ for $i=2,\ldots,n$, that is,
$$
\sum_{i=1}^nd(x_1,\ldots,x_n)_i^z ~\geq ~ (n-1){\,}d(x_1,\ldots,x_n).
$$
\item If $p=n-1$, then $d(x_1,\ldots,x_n)_i^z\geq d(x_1,\ldots,x_n)$ for $i=1,\ldots,n-1$, that is,
$$
\sum_{i=1}^nd(x_1,\ldots,x_n)_i^z ~\geq ~(n-1){\,}d(x_1,\ldots,x_n).
$$
\end{itemize}
\item Case $z\in\left]x_p,x_{p+1}\right[$. Set $\lambda=(z-x_p)/(x_{p+1}-x_p)$.
\begin{itemize}
\item If $1\neq p\neq n-1$, then $d(x_1,\ldots,x_n)_i^z\geq\max\{\lambda,1-\lambda\}{\,}d(x_1,\ldots,x_n)\geq \frac{1}{2}d(x_1,\ldots,x_n)$ for $i=1,\ldots,n$, that is,
$$
\sum_{i=1}^nd(x_1,\ldots,x_n)_i^z ~\geq~ \frac{n}{2}{\,}d(x_1,\ldots,x_n).
$$
\item If $p=1$, then $d(x_1,\ldots,x_n)_i^z\geq \max\{\lambda,1-\lambda\}{\,}d(x_1,\ldots,x_n)$ for $i=2,\ldots,n$, and $d(x_1,\ldots,x_n)_1^z\geq (1-\lambda){\,}d(x_1,\ldots,x_n)$, that is,
\begin{eqnarray*}
\lefteqn{\sum_{i=1}^nd(x_1,\ldots,x_n)_i^z}\\
&\geq &(n-1)\max\{\lambda,1-\lambda\}{\,}d(x_1,\ldots,x_n)+(1-\lambda){\,}d(x_1,\ldots,x_n)\\
&\geq &(n-2)\max\{\lambda,1-\lambda\}{\,}d(x_1,\ldots,x_n)+d(x_1,\ldots,x_n)\\
&\geq &\frac{n}{2}{\,}d(x_1,\ldots,x_n).
\end{eqnarray*}
\item If $p=n-1$, then $d(x_1,\ldots,x_n)_i^z\geq \max\{\lambda,1-\lambda\}{\,}d(x_1,\ldots,x_n)$ for $i=1,\ldots,n-1$, and $d(x_1,\ldots,x_n)_n^z\geq \lambda{\,}d(x_1,\ldots,x_n)$, that is,
\begin{eqnarray*}
\lefteqn{\sum_{i=1}^nd(x_1,\ldots,x_n)_i^z}\\
&\geq &(n-1)\max\{\lambda,1-\lambda\}{\,}d(x_1,\ldots,x_n)+\lambda{\,}d(x_1,\ldots,x_n)\\
&\geq &(n-2)\max\{\lambda,1-\lambda\}{\,}d(x_1,\ldots,x_n)+d(x_1,\ldots,x_n)\\
&\geq &\frac{n}{2}{\,}d(x_1,\ldots,x_n).
\end{eqnarray*}
\end{itemize}
\end{itemize}
To summarize, we have
$$
K^*_n ~\leq ~ \max\{(n-1)^{-1},2/n\} ~=~ 2/n.
$$
To complete the proof, it suffices to observe that the best constant is attained at any tuple having the stated properties.
\end{proof}

\begin{proof}[Proof of Proposition~\ref{prop:Ksns}]
We first observe that $C_{n,s}\leq 1$. Indeed, using standardness of $d$, we obtain
$$
\sup_{x_1,\ldots,x_n\in X\setminus\{e\}}\frac{d(x_1,\ldots,x_n)}{\sum_{i=1}^nd(x_1,\ldots,x_n)_i^e} ~\leq ~\frac{1}{n-1}~\leq~s.
$$

Now, let $x_1,\ldots,x_n,z\in X$ satisfying $|\{x_1,\ldots,x_n\}|\geq 2$ and set
    $$
    R(x_1,\ldots,x_n;z) ~=~ \frac{d(x_1,\ldots,x_n)}{\sum_{i=1}^nd(x_1,\ldots,x_n)_i^z}
    $$
    and
    $$
    R_s(x_1,\ldots,x_n;z) ~=~ \frac{d_s(x_1,\ldots,x_n)}{\sum_{i=1}^nd_s(x_1,\ldots,x_n)_i^z}{\,}.
    $$
    There are two exclusive cases to consider.
\begin{itemize}
\item Case $e\in\{x_1,\ldots,x_n\}$. We can assume that $x_n=e$. If $z=e$ or $e\in\{x_1,\ldots,x_{n-1}\}$, then
$$
R_s(x_1,\ldots,x_{n-1},e;z) ~=~ R(x_1,\ldots,x_{n-1},e;z) ~\leq ~(n-1)^{-1}{\,}.
$$
If $z\neq e$ and $e\notin\{x_1,\ldots,x_{n-1}\}$, then, using $C_{n,s}\leq 1$, we obtain
\begin{eqnarray*}
\lefteqn{R_s(x_1,\ldots,x_{n-1},e;z)}\\
&\leq & \frac{C_{n,s}{\,}d(x_1,\ldots,x_{n-1},e)}{\sum_{i=1}^{n-1}C_{n,s}{\,} d(x_1,\ldots,x_{n-1},e)_i^z+C_{n,s}{\,}d(x_1,\ldots,x_{n-1},z)}\\
&= & R(x_1,\ldots,x_{n-1},e;z) ~\leq ~(n-1)^{-1}{\,}.
\end{eqnarray*}
\item Case $e\notin\{x_1,\ldots,x_n\}$. If $z\neq e$, then we have
$$
R_s(x_1,\ldots,x_n;z) ~=~ R(x_1,\ldots,x_n;z) ~\leq ~(n-1)^{-1}{\,}.
$$
If $z=e$, then by the definition of $C_{n,s}$, we obtain
$$
R_s(x_1,\ldots,x_n;e)~=~\frac{1}{C_{n,s}}{\,}R(x_1,\ldots,x_n;e)~\leq ~s{\,}.
$$
\end{itemize}
The cases discussed above show that the best constant of $d_s$ is less than or equal to $\max\{(n-1)^{-1},s\}=s$. However, we also have
$$
\sup_{x_1,\ldots,x_n\in X\setminus\{e\}}R_s(x_1,\ldots,x_n;e)~=~ \frac{1}{C_{n,s}}{\,}\sup_{x_1,\ldots,x_n\in X\setminus\{e\}}R(x_1,\ldots,x_n;e) ~=~ s,
$$
which shows that the best constant is exactly $s$.
\end{proof}

\begin{proof}[Proof of Proposition~\ref{prop:asww7df}]
We can assume that $n\geq 3$. Let us first prove the inequality. We proceed by decreasing induction on $k$. The result clearly holds for $k=n$. Suppose that it holds for some integer $k$ satisfying $n+1-1/K^*_n<k\leq n$ and let us prove that it still holds for $k-1$.

Let $x_1,\ldots,x_n,z\in X$. By the induction hypothesis, we have
\begin{equation}\label{eq:65dst1}
d(x_1,\ldots,x_n)~\leq ~\frac{1}{1/K^*_n-n+k}\left(\sum_{i=1}^{k-1}d(x_1,\ldots,x_n)_i^z+d(x_1,\ldots,x_n)_k^z\right)
\end{equation}
and
\begin{equation}\label{eq:65dst2}
d(x_1,\ldots,x_n)_k^z~\leq ~\frac{1}{1/K^*_n-n+k}\left(\sum_{i=1}^{k-1}d(x_1,\ldots,x_n)_{\{k,i\}}^{\{z,x_k\}}+d(x_1,\ldots,x_n)\right),
\end{equation}
where $d(x_1,\ldots,x_n)_{\{k,i\}}^{\{z,x_k\}}$ is the function obtained from $d(x_1,\ldots,x_n)$ by setting its $k$th variable to $z$ and its $i$th variable to $x_k$. We then observe that
\begin{equation}\label{eq:65dst3}
d(x_1,\ldots,x_n)_{\{k,i\}}^{\{z,x_k\}} ~=~ d(x_1,\ldots,x_n)_i^z,\qquad i=1,\ldots,k-1.
\end{equation}
By substituting \eqref{eq:65dst3} into \eqref{eq:65dst2} and then \eqref{eq:65dst2} into \eqref{eq:65dst1}, we obtain
\begin{multline*}
d(x_1,\ldots,x_n) ~\leq ~\frac{1}{1/K^*_n-n+k}\left(1+\frac{1}{1/K^*_n-n+k}\right)\sum_{i=1}^{k-1}d(x_1,\ldots,x_n)_i^z\\
\null +\left(\frac{1}{1/K^*_n-n+k}\right)^2{\,}d(x_1,\ldots,x_n),
\end{multline*}
that is,
$$
d(x_1,\ldots,x_n)~\leq ~\frac{1}{1/K^*_n-n+k-1}{\,}\sum_{i=1}^{k-1}d(x_1,\ldots,x_n)_i^z{\,},
$$
which shows that the result still holds for $k-1$.

Let us now prove the second part of the result. The sufficiency is straightforward. Indeed, we have
$$
d(x_1,\ldots,x_n) ~=~ K^*_n\Big(\sum_{i=1}^kd(x_1,\ldots,x_n)_i^z+(n-k)d(x_1,\ldots,x_n)\Big).
$$
Solving this latter equation for $d(x_1,\ldots,x_n)$ immediately provides \eqref{eq:partkkeq}.

Let us prove the necessity. If \eqref{eq:partkkeq} holds, then in view of \eqref{eq:65dst1}--\eqref{eq:65dst3}, we must have
\begin{eqnarray*}
d(x_1,\ldots,x_n) &=& \frac{1}{1/K^*_n-n+p}\left(\sum_{i=1}^{p-1}d(x_1,\ldots,x_n)_i^z+d(x_1,\ldots,x_n)_p^z\right)\\
d(x_1,\ldots,x_n)_p^z &=& \frac{1}{1/K^*_n-n+p}\left(\sum_{i=1}^{p-1}d(x_1,\ldots,x_n)_i^z+d(x_1,\ldots,x_n)\right)
\end{eqnarray*}
for any $p\in\{k+1,\ldots,n\}$. From these two equations, we derive condition \eqref{eq:partkkeq2}. Moreover, taking $p=n$ in the first equation shows that $K^*_n$ is attained at $(x_1,\ldots,x_n;z)$.
\end{proof}

\begin{proof}[Proof of Proposition~\ref{prop:symm5}]
For any $x_1,\ldots,x_n,z\in X$ and any $S\subseteq\{1,\ldots,n\}$ such that $|S|=k$, we have
$$
d(x_1,\ldots,x_n) ~\leq ~ K^*_{n,k}\sum_{i\in S}d(x_1,\ldots,x_n)_i^z.
$$
Summing this inequality over all possible subsets $S$, we obtain
\begin{eqnarray*}
{n\choose k}{\,}d(x_1,\ldots,x_n) &\leq & K^*_{n,k}\sum_{\textstyle{S\subseteq\{1,\ldots,n\}\atop |S|=k}}\sum_{i\in S}d(x_1,\ldots,x_n)_i^z\\
&\leq & K^*_{n,k}\sum_{i=1}^nd(x_1,\ldots,x_n)_i^z\sum_{\textstyle{S\subseteq\{1,\ldots,n\},{\,}S\ni i\atop |S|=k}}1,
\end{eqnarray*}
where the inner sum reduces to ${n-1\choose k-1}$. This establishes the inequality. The optimality of the constant is obtained by considering the distance defined in Example~\ref{ex:2kfff}, for which we have $K^*_n=2/n$ and $K^*_{n,k}=2/k$.
\end{proof}

\begin{proof}[Proof of Proposition~\ref{prop:Ksnsab}]
Let $x_1,\ldots,x_n,z\in X$ satisfying $|\{x_1,\ldots,x_n\}|\geq 2$ and set
    $$
    R_s(x_1,\ldots,x_x;z) ~=~ \frac{d_s(x_1,\ldots,x_n)}{\sum_{i=1}^nd_s(x_1,\ldots,x_n)_i^z}{\,}.
    $$
    There are two exclusive cases to consider.
\begin{itemize}
\item Case $\{a,b\}\nsubseteq\{x_1,\ldots,x_n\}$. We immediately have $R_s(x_1,\ldots,x_x;z)\leq (n-1)^{-1}$.
\item Case $\{a,b\}\subseteq\{x_1,\ldots,x_n\}$. Let $n_a$ (resp.\ $n_b$) be the exact number of occurrences of $a$ (resp.\ $b$) in $x_1,\ldots,x_n$
\begin{itemize}
\item If $n_a\geq 2$ and $n_b\geq 2$, then
$$
R_s(x_1,\ldots,x_n;z) ~=~ \frac{C_{n,s}}{nC_{n,s}} ~=~ \frac{1}{n}{\,}.
$$
\item If $n_a=1$ and $n_b\geq 2$ (or $n_a\geq 2$ and $n_b=1$), then
$$
R_s(x_1,\ldots,x_n;z) ~=~
\begin{cases}
\frac{C_{n,s}}{nC_{n,s}} ~=~ \frac{1}{n}{\,}, & \text{if $z=a$},\\
\frac{C_{n,s}}{(n-1) C_{n,s}+1} ~=~ \frac{1}{n-1+1/C_{n,s}}{\,}, & \text{if $z\neq a$}.
\end{cases}
$$
\item If $n_a=n_b=1$, then
$$
R_s(x_1,\ldots,x_n;z) ~=~
\begin{cases}
\frac{C_{n,s}}{(n-1) C_{n,s}+1} ~=~ \frac{1}{n-1+1/C_{n,s}}{\,}, & \text{if $z\in\{a,b\}$},\\
\frac{C_{n,s}}{(n-2) C_{n,s}+2} ~=~ \frac{1}{n-2+2/C_{n,s}}{\,}, & \text{if $z\notin\{a,b\}$}.
\end{cases}
$$
\end{itemize}
\end{itemize}
The cases discussed above show that $d_s$ is an $n$-distance and its best constant is
$$
K^*_n ~=~ \frac{1}{n-2+2/C_{n,s}} ~=~ s
$$
and it is attained at any $(a,b,x_3,\ldots,x_n;z)\in X^{n+1}$ such that $x_3,\ldots,x_n,z\in X\setminus\{a,b\}$. Moreover, for such tuples we have
$$
d_s(a,b,x_3,\ldots,x_n) ~=~ C_{n,s} ~=~ \frac{2}{1/s-n+2} = \frac{1}{1/K^*_n-n+2}{\,}\sum_{i=1}^2d_s(a,b,x_3,\ldots,x_n)_i^z{\,},
$$
which shows that $K^*_{n,2}$ exists and $K^*_{n,2}=\frac{1}{1/K^*_n-n+2}$. By Proposition~\ref{prop:asww7df} and Corollary~\ref{cor:asww7dfc}, we also have $K^*_{n,k}=\frac{1}{1/K^*_n-n+k}$ for any $k\in\{2,\ldots,n\}$.
\end{proof}

\begin{proof}[Proof of Lemma~\ref{lemma:f456dfgg}]
By Proposition~\ref{prop:nfknskk}, we have
\begin{multline}\label{mult:8a7d1}
d(x_1,\ldots,x_{k-1},x_k,\ldots,x_k)\\
\leq ~ \frac{1}{k+p-1}{\,}\sum_{i=1}^{k-1}d(x_1,\ldots,x_{k-1},x_k,\ldots,x_k)_i^z \\
\null + \frac{p+1}{k+p-1}{\,}d(x_1,\ldots,x_{k-1},x_k,\ldots,x_k,z).
\end{multline}
Using repetition invariance and Proposition~\ref{prop:nfknskk}, we also have
\begin{multline}\label{mult:8a7d2}
d(x_1,\ldots,x_{k-1},x_k,\ldots,x_k,z) ~=~ d(x_1,\ldots,x_{k-1},x_k,z,\ldots,z)\\
\leq ~ \frac{1}{k-1}{\,}\sum_{i=1}^{k-1}d(x_1,\ldots,x_{k-1},x_k,\ldots,x_k,z)_i^z \\
\null + \frac{1}{k-1}{\,}d(x_1,\ldots,x_{k-1},z,\ldots,z).
\end{multline}
Substituting \eqref{mult:8a7d2} into \eqref{mult:8a7d1}, we obtain the claimed result. To see that the constants are optimal, just take the cardinality based distance with distinct $x_1,\ldots,x_k$ and $z=x_k$.
\end{proof}

\begin{proof}[Proof of Proposition~\ref{prop:azzs54sfd}]
Let $k\in\{2,\ldots,n-1\}$, let $n_1,\ldots,n_k\in\{1,\ldots,n\}$ be such that $n_1+\cdots +n_k=n$, and let $d'\colon X^k\to\R_+$ be the map defined in \eqref{eq:dprime}. Using the symmetry of $d'$, we can rewrite Lemma~\ref{lemma:f456dfgg} with $p=n-k$ as follows. For any $j\in\{1,\ldots,k\}$ and any $x_1,\ldots,x_k,z\in X$, we have
\begin{eqnarray*}
d'(x_1,\ldots,x_k) &\leq & \frac{n}{(k-1)(n-1)}{\,}\sum_{\textstyle{i=1\atop i\neq j}}^k d'(x_1,\ldots,x_k)_i^z \\
&& \null + \frac{n-k+1}{(k-1)(n-1)}{\,}d'(x_1,\ldots,x_k)_j^z{\,}.
\end{eqnarray*}
Summing the latter inequality over $j=1,\ldots,k$, we obtain
\begin{eqnarray*}
\sum_{j=1}^k d'(x_1,\ldots,x_k) &\leq & \frac{n}{(k-1)(n-1)}{\,}\sum_{i=1}^k{\,}\sum_{\textstyle{j=1\atop j\neq i}}^k d'(x_1,\ldots,x_k)_i^z \\
&& \null + \frac{n-k+1}{(k-1)(n-1)}{\,}\sum_{i=1}^k d'(x_1,\ldots,x_k)_i^z,
\end{eqnarray*}
that is,
$$
d'(x_1,\ldots,x_k)~\leq ~ ((k-1)^{-1}+(k(k-1)(n-1))^{-1}){\,}\sum_{i=1}^k d'(x_1,\ldots,x_k)_i^z,
$$
This shows that $d$ satisfies the strong $k$-simplex inequality with the claimed constant $M_{n,k}$, which does not depend on $n_1,\ldots,n_k$.

To see that this constant is optimal and attained, we now construct a standard $n$-distance that is repetition invariant and for which the constant $M_{n,k}$ is attained.

Let $k\in\{2,\ldots,n-1\}$, let $X=\{y_1,\ldots,y_k,e\}$, with $|X|=k+1$, let $d_c$ be the cardinality based $n$-distance, and define $d\colon X^n\to\R_+$ by $d(x_1,\ldots,x_n)=0$ if $|\{x_1,\ldots,x_n\}|=1$, and
$$
d(x_1,\ldots,x_n) ~=~
\begin{cases}
\frac{1}{k-1}{\,}d_c(x_1,\ldots,x_n), & \text{if $e\notin\{x_1,\ldots,x_n\}$},\\
a, & \text{if $e\in\{x_1,\ldots,x_n\}\neq X$},\\
b, & \text{if $e\in\{x_1,\ldots,x_n\}= X$},
\end{cases}
$$
otherwise, where
$$
\frac{(k-1)(n-1)}{k(n-1)+1} ~=~ a ~< ~ b ~=~ \frac{k}{k-1}{\,}a ~=~ \frac{k(n-1)}{k(n-1)+1} ~<~ 1.
$$

Let us show first that $d$ is a standard $n$-distance on $X$. Let $x_1,\ldots,x_n,z\in X$ be such that $|\{x_1,\ldots,x_n\}|\geq 2$. There are three exclusive cases to consider.
\begin{itemize}
\item Case $e\notin\{x_1,\ldots,x_n\}$.
\begin{itemize}
\item If $z\neq e$, then we have
\begin{eqnarray*}
d(x_1,\ldots,x_n) &=& \frac{1}{k-1}{\,}d_c(x_1,\ldots,x_n)\\
&\leq & \frac{1}{n-1}{\,}\sum_{i=1}^n\frac{1}{k-1}{\,}d_c(x_1,\ldots,x_n)_i^z\\
&=& \frac{1}{n-1}{\,}\sum_{i=1}^n d(x_1,\ldots,x_n)_i^z.
\end{eqnarray*}
\item If $z=e$, then assume first that $\{x_1,\ldots,x_n\}\neq X\setminus\{e\}$. We then have
\begin{eqnarray*}
d(x_1,\ldots,x_n) &=& \frac{1}{k-1}{\,}d_c(x_1,\ldots,x_n)~\leq ~\frac{k-2}{k-1}\\
&\leq & \frac{n}{n-1}{\,}a ~=~ \frac{1}{n-1}{\,}\sum_{i=1}^n d(x_1,\ldots,x_n)_i^e.
\end{eqnarray*}
If $\{x_1,\ldots,x_n\}= X\setminus\{e\}=\{y_1,\ldots,y_k\}$, then $d(x_1,\ldots,x_n)=1$ and it is not difficult to see that
$$
\sum_{i=1}^n d(x_1,\ldots,x_n)_i^e ~\geq ~(k-1)a+(n-k+1)b ~=~ n-1.
$$
\end{itemize}
\item Case $e\in\{x_1,\ldots,x_n\}\neq X$. We have
$$
d(x_1,\ldots,x_n) ~=~ a ~\leq ~ \frac{1}{n-1}{\,}\sum_{i=1}^n d(x_1,\ldots,x_n)_i^z
$$
since the sum on the right is greater than $(n-1)a$.
\item Case $e\in\{x_1,\ldots,x_n\}= X$. Let $n_i$ (resp.\ $n_e$) be the number of $y_i$ (resp.\ $e$) among $x_1,\ldots,x_n$.
\begin{itemize}
\item If $z=y_j$ for some $j\in\{1,\ldots,k\}$, then
\begin{eqnarray*}
\lefteqn{\sum_{i=1}^nd(x_1,\ldots,x_n)_i^{y_j} ~=~ \sum_{i=1}^nd(n_1\!\Cdot y_1,\ldots,n_k\!\Cdot y_k,n_e\!\Cdot e)_i^{y_j}}\\
&=&
\begin{cases}
\ell{\,}a+(n-\ell)b, & \text{if $n_e>1$},\\
\ell{\,}a+(n-\ell -1)b+1, & \text{if $n_e=1$},
\end{cases}
\end{eqnarray*}
where $\ell=|\{q\in\{1,\ldots,k\}\setminus\{j\}:n_q=1\}|\leq k-1$. It follows that
\begin{eqnarray*}
d(x_1,\ldots,x_n) ~=~ b ~<~ 1 &=& \frac{(k-1)a+(n-k+1)b}{n-1}\\
&\leq & \frac{1}{n-1}\sum_{i=1}^nd(x_1,\ldots,x_n)_i^z.
\end{eqnarray*}
\item If $z=e$, then
\begin{eqnarray*}
\sum_{i=1}^nd(x_1,\ldots,x_n)_i^e &=& \sum_{i=1}^nd(n_1\!\Cdot y_1,\ldots,n_k\!\Cdot y_k,n_e\!\Cdot e)_i^e\\
&=& \ell a+(n-\ell)b,
\end{eqnarray*}
where $\ell=|\{q\in\{1,\ldots,k\}:n_q=1\}|\leq k$. It follows that
\begin{eqnarray*}
d(x_1,\ldots,x_n) ~=~ b &=& \frac{k{\,}a+(n-k)b}{n-1}\\
&\leq & \frac{1}{n-1}\sum_{i=1}^nd(x_1,\ldots,x_n)_i^z.
\end{eqnarray*}
\end{itemize}
\end{itemize}
In view of the cases discussed above, we see that $d$ is a standard $n$-distance. To see that the claimed constant $M_{n,k}$ is attained, we just observe that
$$
\frac{d'(y_1,\ldots,y_k)}{\sum_{i=1}^k d'(y_1,\ldots,y_k)_i^e} ~=~ \frac{1}{k{\,}a} ~=~ M_{n,k}{\,},
$$
where $d'\colon X^k\to\R_+$ is the (symmetric) map defined in \eqref{eq:dprime}.
\end{proof}

\begin{proof}[Proof of Proposition~\ref{prop:sfda4}]
Let $x_1,\ldots,x_n,x'_1,\ldots,x'_n\in X$ such that
$$
\{x_1,\ldots,x_n\} ~=~ \{x'_1,\ldots,x'_n\}.
$$
We can assume that $|\{x_1,\ldots,x_n\}|\notin\{1,n\}$. Then, there exist pairwise distinct $i,j,k\in\{1,\ldots,n\}$ such that $x_i=x_j\neq x_k$. By nonincreasingness under identification of variables, we can replace $x_i$ or $x_j$ by $x_k$ without changing $d(x_1,\ldots,x_n)$. By iterating this argument, we finally obtain $d(x_1,\ldots,x_n)=d(x'_1,\ldots,x'_n)$.

Let us illustrate this proof. To see that
$$
d(a,b,b,c,c,e) ~=~ d(a,a,b,c,e,e),
$$
we consider the chain of equalities
$$
d(a,b,b,c,c,e) ~=~ d(a,a,b,c,c,e) ~=~ d(a,a,b,c,e,e).\qedhere
$$
\end{proof}

\begin{proof}[Proof of Proposition~\ref{prop:sfda45}]
Let $n_1,\ldots,n_k\in\{1,\ldots,n\}$ such that $n_1+\cdots +n_k=n$. Let also $d'\colon X^k\to\R_+$ be the map defined in \eqref{eq:dprime} and let $x_1,\ldots,x_k,z\in X$. We only need to prove that
$$
d'(x_1,\ldots,x_k) ~\leq ~ K^*_{n,k}{\,}{\,}\sum_{i=1}^kd'(x_1,\ldots,x_k)_i^z{\,}.
$$
Using repetition invariance (see Proposition~\ref{prop:sfda4}) and nonincreasingness under identification of variables, we obtain
$$
d'(x_1,\ldots,x_k) ~\leq ~ d(x_1,\ldots,x_k,z,\ldots,z) ~\leq ~ K^*_{n,k}{\,}\sum_{i=1}^kd(x_1,\ldots,x_k,z,\ldots,z)_i^z
$$
and
$$
d(x_1,\ldots,x_k,z,\ldots,z)_i^z ~=~ d'(x_1,\ldots,x_k)_i^z{\,},\qquad i\in\{1,\ldots,k\}.
$$
This completes the proof.
\end{proof}

\begin{proof}[Proof of Lemma~\ref{lemma:sad7f}]
We proceed by induction. The result trivially holds for $k=1$. Suppose now that the result holds for some $k\in\{1,\ldots,n-1\}$ and let us prove that it still holds for $k+1$. Using Proposition~\ref{prop:nfknskk} and then the induction hypothesis we obtain
\begin{eqnarray*}
d(x_1,\ldots,x_{k+1},z,\ldots,z) &\leq & \frac{1}{k}\sum_{i=1}^{k+1}d(x_1,\ldots,x_{k+1},z,\ldots,z)_i^z\\
&\leq & \frac{1}{k}\sum_{i=1}^{k+1}\sum_{\textstyle{j=1\atop j\neq i}}^{k+1}g(x_j,z)
~=~ \sum_{i=1}^{k+1}g(x_i,z),
\end{eqnarray*}
which completes the proof.
\end{proof}

\section*{Acknowledgments}

The authors would like to thank Bruno Teheux for his insights and useful comments that helped improve this paper. Gergely Kiss is supported by the Hungarian National Foundation for Scientific Research, Grant No. K124749, and the Premium Postdoctoral Fellowship of Hungarian Academy of Sciences.


\begin{thebibliography}{99}


\bibitem{AguMarMaySunVal12}
I.~Aguil\'o, J.~Mart\'{\i}n, G.~Mayor, J.~Su\~{n}er, and O.~Valero.
\newblock Smallest enclosing ball multidistance.
\newblock {\em Communications in Information and Systems} 12(3):185--194, 2012.

%

\bibitem{Deza2014}
M.~M.~Deza and E.~Deza.
\newblock \emph{Encyclopedia of distances}, third edition.
\newblock Springer, 2014.

%

\bibitem{Dhage1992}
B.~C.~Dhage.
\newblock Generalised metric spaces and mappings with fixed point.
\newblock {\em Bulletin of the Calcutta Mathematical Society} 84:329--336, 1992.

%
%

%
%
%
%
%
\bibitem{KisMarTeh16}
G.~Kiss, J.-L.~Marichal, and B.~Teheux.
\newblock An extension of the concept of distance as functions of several variables.
\newblock {\em Proc. 36th Linz Seminar on Fuzzy Set Theory} (LINZ 2016), Linz, Austria, Feb.\ 2-6, pp. 53--56, 2016.


\bibitem{KisMarTeh18}
G.~Kiss, J.-L.~Marichal, and B.~Teheux.
\newblock A generalization of the concept of distance based on the simplex inequality.
\newblock {\em Beitr Algebra Geom}, 59(2):247--266, 2018.


\bibitem{MarMay11}
J.~Mart\'{\i}n and G.~Mayor.
\newblock Multi-argument distances.
\newblock {\em Fuzzy Sets and Systems} 167:92--100, 2011.

\bibitem{MarMayVal11}
J.~Mart\'{\i}n, G.~Mayor, and O.~Valero.
\newblock Functionally expressible multidistances.
\newblock In: {\em EUSFLAT-LFA Joint Conf.} July 18-22, 2011, Aix-les-Bains, France. Series: {\em Advances in Intelligent Systems Research}. Atlantis Press, pp.\ 41--46, 2011.


%
%

\bibitem{Mustafa2006}
Z.~Mustafa and B.~Sims.
\newblock A new approach to generalized metric spaces.
\newblock {\em Journal of Nonlinear and Convex Analysis}, 7(2): 289--297, 2006.

%

\end{thebibliography}
\end{document}